\theoremstyle{plain}
\newtheorem{Theorem}{Theorem}[section]
\newtheorem{Proposition}[Theorem]{Proposition}
\newtheorem{Lemma}[Theorem]{Lemma}
\newtheorem{Remark}[Theorem]{Remark}
\numberwithin{Theorem}{section}
\numberwithin{equation}{section}
\def\proof{\noindent{{\bf Proof. }}}
\def\square{\vbox{
\hrule height .4pt \hbox{\vrule width .4pt height 7pt \kern 7pt
\vrule width .4pt} \hrule height .4pt }}
\def\QED{\hfill {$\square$}\goodbreak \medskip}
\newcommand{\average}{{\mathchoice {\kern1ex\vcenter{\hrule height.4pt
width 6pt depth0pt} \kern-9.7pt} {\kern1ex\vcenter{\hrule
height.4pt width 4.3pt depth0pt} \kern-7pt} {} {} }}
\def\R{\mathbb{R}}
\renewcommand{\a }{\alpha }
\newcommand{\e }{\varepsilon }
\newcommand{\g }{\gamma}
\renewcommand{\l }{\lambda }
\newcommand{\vp }{\varphi }
\newcommand{\s }{\sigma }
\renewcommand{\O }{\Omega }
\newcommand{\ov}{\overline}
\newcommand{\be}{\begin{equation}}
\newcommand{\ee}{\end{equation}}
\newcommand{\ti}{\widetilde}
\newcommand{\calX}{{\mathcal X}}
\newcommand{\calY}{{\mathcal Y}}
\newcommand{\mbL }{\mathbb{L}}
\DeclareMathOperator{\inv}{inv}
\newcommand{\N}{\mathbb{N}}
\newcommand{\Z}{\mathbb{Z}}
\newcommand{\cI}{{\mathcal I}}
\newcommand{\cL}{{\mathcal L}}
\newcommand{\cM}{{\mathcal M}}
\newcommand{\cO}{{\mathcal O}}
\newcommand{\cU}{{\mathcal U}}
\newcommand{\cZ}{{\mathcal Z}}
\newcommand{\eps}{\varepsilon}
\DeclareMathOperator{\id}{id}
\renewcommand{\epsilon}{\varepsilon}
\begin{document}

\title[Serrin's overdetermined problem on the sphere]
{Serrin's overdetermined problem on the sphere}

\author{Mouhamed Moustapha Fall}
\address{M. M. F.: African Institute for Mathematical Sciences in Senegal, KM 2, Route de
Joal, B.P. 14 18. Mbour, Senegal.}
\email{mouhamed.m.fall@aims-senegal.org}

\author{Ignace Aristide Minlend}
\address{I. A. M.:African Institute for Mathematical Sciences in Senegal, KM 2, Route de
Joal, B.P. 14 18. Mbour, Senegal. }
\email{ignace.a.minlend@aims-senegal.org}

\author{Tobias Weth}
\address{T.W.:  Goethe-Universit\"{a}t Frankfurt, Institut f\"{u}r Mathematik.
Robert-Mayer-Str. 10 D-60629 Frankfurt, Germany.}

\email{weth@math.uni-frankfurt.de}

\thanks{Work supported by DAAD and AvH}

\keywords{Overdetermined problems,  Delauney-type surface}

\subjclass[2010]{Primary ; Secondary}

\begin{abstract}
We study Serrin's overdetermined
boundary value problem
\begin{equation*}
 -\Delta_{S^N}\, u=1 \quad \text{ in $\Omega$},\qquad u=0, \; \partial_\eta u=\textrm{const}  \quad \text{on $\partial \Omega$}
\end{equation*}
in subdomains $\Omega$ of the round unit sphere $S^N \subset
\R^{N+1}$, where $\Delta_{S^N}$ denotes the Laplace-Beltrami
operator on $S^N$. A subdomain $\Omega$ of $S^N$ is called a Serrin
domain if it admits a solution of this overdetermined problem.  In
our main result, we construct Serrin domains in $S^N$, $N \ge 2$
which bifurcate from symmetric straight tubular neighborhoods of the
equator. Our result provides the first example of Serrin domains in
$S^{N}$ which are not bounded by geodesic spheres.

\end{abstract}

\maketitle
\section{Introduction and main result}
The present paper is concerned with an overdetermined boundary value problem for the equation
\begin{equation}\label{eq:Pro-1}
 -\Delta_{S^N}  u=1 \qquad \text{ in $\Omega$}
\end{equation}
in a subdomain $\Omega$ of the unit sphere $S^{N}\subset \mathbb{R}^{N+1}$. More precisely, we are interested in domains $\Omega \subset S^N$ of class $C^2$ such that~(\ref{eq:Pro-1}) is solvable subject to the boundary conditions
\begin{equation}\label{eq:Pro-2}
u=0, \quad \partial_\eta u=\textrm{const}  \qquad \text{on $\partial
\Omega$.}
\end{equation}
Here $\Delta_{S^N}$ is the Laplace-Beltrami operator on $S^{N}$, and  $\eta$ is the unit outer normal on  $\partial \O$. A motivation to study this problem is given by classical and recent results for the corresponding problem in the euclidean setting, i.e. the problem of
finding a smooth domain $\O \subset \R^N$ such that the overdetermined problem
\begin{equation}\label{eq:Pro-1-eucl}
 -\Delta u=1 \quad \text{ in $\Omega$}, \qquad u=0,\quad \partial_\nu u=\textrm{const} \qquad \text{on $\partial \Omega$}
\end{equation}
admits a solution. The additional Neumann boundary condition in (\ref{eq:Pro-1-eucl}) arises in many applications as a shape optimization problem for the underlying domain $\Omega$. For a detailed discussion of some applications e.g. in fluid dynamics and the linear theory of torsion,
see \cite{Serrin,Sirakov}. The most celebrated result for the overdetermined problem (\ref{eq:Pro-1-eucl}) has been obtained by Serrin in 1971. In his paper  \cite{Serrin}, Serrin proved that, among smooth bounded domains $\O$, (\ref{eq:Pro-1-eucl}) only admits a solution if $\O$ is a ball. More precisely, in \cite{Serrin} the torsion equation $-\Delta u = 1$ is considered as an important  special case within a class of semilinear problems for which the same rigidity result is established. While Serrin's proof relies on the moving plane method which appeared for the first time in a PDE context in \cite{Serrin}, Weinberger \cite{Wein} found an alternative argument based on integral identities which yields the result in the special case of problem (\ref{eq:Pro-1-eucl}).

It is natural to ask if and in which sense Serrin's classification result carries over to the case of other ambient Riemannian manifolds $(\cM,g)$ in place of $(\R^N,g_{eucl})$. More precisely, letting $\Delta_\cM$ denote the Laplace-Beltrami operator on $\cM$, we are interested in rigidity and non-rigidity results for {\em Serrin domains in $\cM$}, i.e., for subdomains $\Omega \subset \cM$
in which the overdetermined problem
\begin{equation}
\label{eq:general-M}
 -\Delta_{\cM} u=1 \quad \text{ in $\Omega$},\qquad u=0, \quad \partial_\eta u=\textrm{const}  \quad \text{on $\partial \Omega$}
\end{equation}
is solvable. We note that in  \cite{FM-Serr}, the first two authors proved that Serrin domains, which are perturbation of small geodesic balls, exist in any compact Riemannian manifold. As observed by the second author in \cite{I.A.M}, Serrin domains arise in the context of Cheeger sets in a Riemannian framework. To explain this connection more precisely, we recall that the Cheeger constant of a Lipschitz subdomain $\Omega \subset \cM$ is given by
\be
\label{eq:def_Cheeg} h(\O):= \inf_{A\subset \O }\frac{P(A)}{|A|}.
\ee
Here the infimum is taken over measurable  subsets $A
\subset \Omega$, with finite perimeter $P(A)$ and where $|A|$  denotes the volume   of $A$  (both with respect to the metric $g$).  If $h(\Omega)$ is
attained by $\O$ itself, $\Omega$ is usually called self-Cheeger, and it is called uniquely self-Cheeger if $\Omega$ is the
only subset which attains $h(\Omega)$. By means of the P-function method, it is shown in \cite[Theorem 1.2]{I.A.M} that every Serrin domain in a compact Riemannian manifold $\cM$ with nonnegative Ricci curvature is uniquely self-Cheeger. Cheeger constants play an important role in eigenvalue estimates on Riemannian manifolds (see \cite{chavel}), whereas in the classical Euclidean case $(\cM,g)=(\R^N,g_{eucl})$ these notions have applications in the denoising  problem in  image processing, see e.g. \cite{EParini,Leon}.

The result of Serrin was extended in \cite{Ku-Pra} to subdomains $\Omega$ of the round sphere $\cM = S^{N}$ which are contained in a hemisphere.
 More precisely, it is proved in \cite{Ku-Pra} that any smooth Serrin domain $\Omega$ contained in a hemisphere of $S^N$ is a geodesic ball.

However, geodesic balls are not the only Serrin domains in $S^N$. Identifying $S^{N-1}$ with the equator $S^{N-1} \times \{0\} \subset S^{N}$,
it is easy to see that the symmetric straight tubular neighborhoods
\begin{equation}
  \label{eq:straight-tubular-neighborhoods}
\Bigl\{\bigl((\cos \theta) \sigma, \sin  \theta\bigr) \::\:
\sigma \in S^{N-1},\: \:|\theta|<\lambda\Bigr\} \subset S^N, \qquad
0<\lambda < \frac{\pi}{2}
\end{equation}
of $S^{N-1}$ are Serrin domains, see Section~\ref{sec:transf-probl-its} below for details.
We note that, as the geodesic balls, these domains are also bounded by geodesic spheres in $S^{N}$.
It is therefore tempting to guess that this property is shared by any Serrin domain in $S^N$.
In the present paper we show that this is not the case.
More precisely,  we construct Serrin domains in $S^{N}$ of the form
\begin{equation}
  \label{eq:def-D-phi}
D_\phi:= \Bigl\{\bigl((\cos \theta) \sigma
         , \sin \theta \bigr) \::\: \sigma \in S^{N-1},\: \theta \in \R,\:
|\theta| < \phi(\sigma)\Bigr\} \subset S^N,
\end{equation}
where, in contrast to the sets given in (\ref{eq:straight-tubular-neighborhoods}),
$\phi: S^{N-1} \to \R$ is a nonconstant $C^2$-function  with $0< \phi < \frac{\pi}{2}$.

To state our main result, we need to introduce some notions. In the following, we call a non constant  function $\phi: S^{N-1} \to \R$ {\em axially symmetric (with respect to the axis $\R e_1$)} if
\begin{equation}
  \label{eq:def-axial-symmetry}
\phi \equiv \phi \circ A \qquad \text{for any $A \in O(N)$ with $A e_1 = e_1$.}
\end{equation}
Moreover, for a nonnegative integer $j$, we let $Y_j$ be the unique $L^2$-normalized real-valued axially
symmetric spherical harmonic of degree $j$. So $Y_j$ is the
 restriction of the uniquely determined real-valued harmonic polynomial $P_j$ of degree $j$ to $S^{N-1}$
  which  satisfies (\ref{eq:def-axial-symmetry}) and
\begin{equation*}
\int_{S^{N-1}} P_j^2\,d\sigma =\int_{S^{N-1}} Y_j^2\,d\sigma=1.
\end{equation*}
Here and in the following, $d\sigma$ always denotes the volume element of $S^{N-1}$. The unique existence of $Y_j$ is well known; a proof and some explicit formulas for $Y_j$ are given e.g. in
\cite[Chapter 2]{Helms}. Our main result is the following.
\begin{Theorem}\label{teo1}
Let $N\geq 2$ and $\alpha \in (0,1)$. Then there exists a strictly decreasing sequence $(\lambda_j)_{j \ge 2}$ with $\lim \limits_{j \to \infty}\lambda_j = 0$
and the following property:\\
For each $j  \ge 2$, there exists $\eps_j>0$ and a curve
\begin{align*}
 (-\e_j,\e_j) &\longrightarrow  (0,\infty) \times C^{2,\alpha}(S^{N-1})\\
        s&\longmapsto (\lambda_j(s),\vp_s^j)
 \end{align*}
with $\vp_0^j\equiv 0$, $\lambda_j(0)=\lambda_j$ and such that for all
$s\in(-\e_j,\e_j),$ letting $\phi^{j}_s=\l_j(s)+\vp_s^j$,  there
exists a solution $u \in C^{2,\a}(\ov{D_{\phi^{j}_s}})$ of the
overdetermined problem
\begin{equation}
 \label{eq:Proe1}
 \left\{
   \begin{aligned}
  -\Delta_{\text{\tiny $S^N$}}\, u&=1&&\qquad \textrm{in}\quad D_{\phi^{j}_s}\\
u&=0,\quad  \partial_\nu u= const &&\qquad \textrm{on }\quad
\partial D_{\phi^{j}_s}.
 \end{aligned}
\right.
\end{equation}
Moreover,  for every $s \in (-\e_j,\e_j)$, the function $\vp_s^j \in C^{2,\alpha}(S^{N-1})$ is axially symmetric. Furthermore, we
have
$$
\vp_s^j=s\Bigl(Y_j +w_s^j\Bigr) \qquad \text{in $C^{2,\alpha}(S^{N-1})$ for $s \in (-\e_j,\e_j)$,}
$$
where $s\mapsto w_s^j$ is a smooth map
$(-\e_j,\e_j) \to  C^{2,\alpha}(S^N)$ satisfying
$w_0^j\equiv 0$ and
\begin{equation}
  \label{eq:orthogonality-cond-main-thm}
\int_{S^{N-1}} w_s^j(\sigma) Y_j(\sigma) \,d\sigma = 0 \qquad
\text{for $s \in (-\e_j,\e_j)$.}
\end{equation}
\end{Theorem}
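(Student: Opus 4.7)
The approach is to recast Serrin's overdetermined condition on domains of the form $D_{\lambda+\varphi}$ into a zero-set equation for a smooth nonlinear operator, and then to apply the Crandall--Rabinowitz bifurcation theorem from a simple eigenvalue, with the family of symmetric straight tubular neighborhoods~(\ref{eq:straight-tubular-neighborhoods}) serving as the trivial branch. Let $X\subset C^{2,\alpha}(S^{N-1})$ denote the closed subspace of functions axially symmetric with respect to $\R e_1$, and let $X_0\subset X$ be the codimension-one subspace of functions with vanishing spherical mean. For $\lambda\in(0,\pi/2)$ and $\varphi\in X$ of sufficiently small norm, $D_{\lambda+\varphi}\subset S^N$ is a $C^{2,\alpha}$-smooth subdomain, and standard elliptic theory yields a unique $u_{\lambda,\varphi}\in C^{2,\alpha}(\overline{D_{\lambda+\varphi}})$ with $-\Delta_{S^N}u_{\lambda,\varphi}=1$ and $u_{\lambda,\varphi}|_{\partial D_{\lambda+\varphi}}=0$. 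Parameterizing the upper sheet of $\partial D_{\lambda+\varphi}$ by $\sigma\in S^{N-1}$ via $\sigma\mapsto((\cos\phi(\sigma))\sigma,\sin\phi(\sigma))$ with $\phi=\lambda+\varphi$, I define
\[
F(\lambda,\varphi)(\sigma):=\partial_\eta u_{\lambda,\varphi}\bigl((\cos\phi(\sigma))\sigma,\sin\phi(\sigma)\bigr)-m(\lambda,\varphi),
\]
where $m(\lambda,\varphi)$ is the spherical mean of the first term. The overdetermined condition in~(\ref{eq:Proe1}) then becomes $F(\lambda,\varphi)=0$ in $X_0$, the lower boundary sheet yielding the same equation by the $\theta\mapsto-\theta$ symmetry. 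Smoothness of $F$ on a neighborhood of $(0,\pi/2)\times\{0\}$ follows in the usual way by pulling $D_{\lambda+\varphi}$ back to $D_\lambda$ via a $\varphi$-dependent diffeomorphism and invoking elliptic regularity with parameters.

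By uniqueness, $u_{\lambda,0}$ on the symmetric tube depends only on the latitude $\theta$, so $\partial_\eta u_{\lambda,0}$ is constant on $\partial D_\lambda$ and $F(\lambda,0)\equiv 0$ for every $\lambda\in(0,\pi/2)$: this is the trivial branch. I next compute the Fr\'echet derivative $L_\lambda:=D_\varphi F(\lambda,0)\colon X_0\to X_0$. Because the construction is $O(N)$-equivariant in $\sigma$, $L_\lambda$ commutes with the $O(N)$-action and diagonalizes in the orthonormal basis of axially symmetric spherical harmonics: $L_\lambda Y_j=\mu_j(\lambda)Y_j$. Writing the $S^N$-metric in the warped-product form $g=d\theta^2+\cos^2\theta\,g_{S^{N-1}}$, the eigenvalue $\mu_j(\lambda)$ can be read off from an associated linear Sturm--Liouville problem on $(-\lambda,\lambda)$ in $\theta$, the angular data entering only through the eigenvalue $j(j+N-2)$ of $-\Delta_{S^{N-1}}$ on $Y_j$.

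The heart of the proof is the spectral study of the maps $\mu_j(\cdot)$. For each $j\ge 2$ I would establish (a) existence of a unique $\lambda_j\in(0,\pi/2)$ with $\mu_j(\lambda_j)=0$; (b) transversality $\mu_j'(\lambda_j)\ne 0$; (c) strict monotonicity $\lambda_{j+1}<\lambda_j$; and (d) the decay $\lambda_j\to 0$ as $j\to\infty$. This Sturm--Liouville analysis, together with the simplicity assertion $\ker L_{\lambda_j}\cap X_0=\spann\{Y_j\}$, is the main technical obstacle. The exclusion $j=1$ reflects that $Y_1$ (proportional to $\sigma\mapsto\sigma\cdot e_1$) is infinitesimally tangent to a rigid rotation of $S^N$ in the $(e_1,e_{N+1})$-plane, and therefore corresponds either to a symmetry-induced kernel that must be factored out or to a signed $\mu_1$ without a zero.

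Once (a)--(d) and the simplicity are in hand, the Crandall--Rabinowitz bifurcation theorem applied at $(\lambda_j,0)$ produces a smooth local curve $s\mapsto(\lambda_j(s),\varphi_s^j)\in(0,\pi/2)\times X$ of nontrivial zeros of $F$ through $(\lambda_j,0)$, with $\varphi_0^j\equiv 0$ and $\varphi_s^j=sY_j+o(s)$ in $C^{2,\alpha}(S^{N-1})$. Choosing as complement of $\ker L_{\lambda_j}$ the $L^2$-orthogonal complement of $\spann\{Y_j\}$ inside $X_0$, the Crandall--Rabinowitz curve can be written uniquely as $\varphi_s^j=s(Y_j+w_s^j)$ with $w_s^j$ in that complement, encoding exactly the orthogonality~(\ref{eq:orthogonality-cond-main-thm}); axial symmetry of $w_s^j$ is automatic since the entire construction is performed inside $X$. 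The corresponding domains $D_{\phi_s^j}$ with $\phi_s^j=\lambda_j(s)+\varphi_s^j$ are then the claimed Serrin domains.
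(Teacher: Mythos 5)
Your strategy is the same as the paper's: pull the overdetermined condition back to a zero-set equation for a nonlinear Dirichlet-to-Neumann type operator, diagonalize the linearization at the trivial branch in the basis of axially symmetric spherical harmonics, locate and study the eigenvalue curves $\mu_j(\lambda)$, and then invoke Crandall--Rabinowitz. The paper uses $G(\lambda,\varphi)=H(\lambda+\varphi)-H(\lambda)$ with target $\calY\subset C^{1,\alpha}$ rather than subtracting the spherical mean; both normalizations of the Neumann constant work, but note that in your formulation the domain of $\varphi$ must be restricted to $X_0$ (not $X$), since $F(\lambda,c)\equiv 0$ for constants $c$ would otherwise give a two-dimensional kernel $\spann\{1,Y_j\}$ and break the simple-eigenvalue hypothesis. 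Also, the target of $L_\lambda$ should be the zero-mean subspace of $C^{1,\alpha}$, not $C^{2,\alpha}$ -- the Dirichlet-to-Neumann map loses one derivative.

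The substantive gap is exactly the part you flag as ``the main technical obstacle'': items (a)--(d) and the simplicity of the kernel are asserted as goals of a ``Sturm--Liouville analysis'' on $(-\lambda,\lambda)$ but not carried out, and this is where essentially all the work in the paper lives. For a fixed $\lambda$, the $\theta$-ODE gives $\sigma_j(\lambda)$ in terms of the normal derivative of a solution to a $j$- and $\lambda$-dependent boundary value problem, and it is not at all routine to read off monotonicity in $\lambda$, uniqueness of the zero, the sign of $\sigma_j'$ at the zero, strict monotonicity in $j$, or the asymptotics $\lambda_j\to 0$ from that representation. The paper's key device (Proposition~\ref{proposition-eigenvalues}) is to trade the two-point $\theta$-problem for a single \emph{first-order Riccati-type ODE in $\lambda$} for the quantity $f_j(\lambda)=\cos\lambda\,B_j'(\sin\lambda)/B_j(\sin\lambda)$, yielding $\sigma_j(\lambda)=\tilde u'(\lambda)\bigl((N-1)\tan\lambda-f_j(\lambda)\bigr)-1$. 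The comparison bounds $c_j\tan\lambda\le f_j(\lambda)\le N\sqrt{\gamma_j}/\cos\lambda$ and the closed-form $\sigma_j'(\lambda_j)=\tilde u'(\lambda_j)\frac{N-1-\gamma_j}{\cos^2\lambda_j}$ then deliver (a)--(d) directly. Without this (or an equivalent) mechanism your outline leaves the core of the theorem unproved. A second, smaller omission: verifying the Fredholm hypothesis of Crandall--Rabinowitz in H\"older spaces requires showing that the range of $\mbL_{\lambda_j}$ is closed of codimension one in $C^{1,\alpha}$; the paper does this by first establishing the isomorphism in Sobolev spaces (Proposition~\ref{sec:spectr-prop-line}) and then bootstrapping to $C^{2,\alpha}$ regularity via Grisvard's theorem. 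This step is not addressed in your proposal.

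Finally, a factual correction on your $j=1$ remark: in this $\theta$-even parameterization the rotation in the $(e_1,e_{N+1})$-plane does \emph{not} preserve the class $D_\phi$, so $Y_1$ is not a symmetry-induced kernel direction; rather, the paper computes $f_1(\lambda)=(N-1)\tan\lambda$, giving $\sigma_1(\lambda)\equiv -1$, so the first eigenvalue curve simply never vanishes.
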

%For $N=2$ the domains $D_{\phi^{j}_s}$ look as in Figure 1.
\begin{figure}[H]
 %  \begin{minipage}[c]{.46\linewidth}
 %     \includegraphics[scale=0.4]{S13.PNG}
 %  \end{minipage} \hfill
 %  \begin{minipage}[c]{.46\linewidth}
 %     \includegraphics[scale=0.4]{S11.PNG}
 %  \end{minipage}
   \includegraphics[scale=0.4]{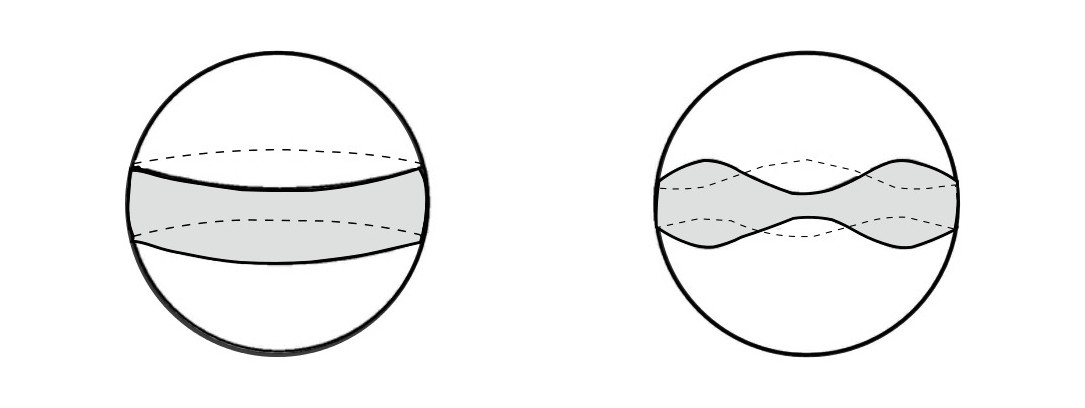}
   \caption{The domains  $D_{\l_j}$ on the left and $D_{\phi^{j}_s}$, for $j=3$, $s \not =0$ on the right.}\label{fid:S-d}
\end{figure}
We note that the orthogonality condition
(\ref{eq:orthogonality-cond-main-thm}) implies that the functions
$\phi^{j}_s$ are not constant for $s \not = 0$, so the corresponding
domains $D_{\phi^{j}_s}$ are nontrivial, as illustrated in Figure \ref{fid:S-d}. We also note that the axial
symmetry of the functions $\vp_s^j$ in Theorem~\ref{teo1} leads to
domains $D_{\phi^{j}_s} \subset S^N$ having an axial (or radial)
symmetry of codimension two, i.e., they are invariant under 
every orthogonal transformation which leaves the two-dimensional
subspace $\R \times \{0_{\,\R^{N-1}}\} \times \R \subset \R^{N+1}$
fixed. We recall here that the unit vector $e_1$ lies in the same
hyperplane as $S^{N-1} \subset S^N$. In the case $N=2$, this
property merely amounts to reflection symmetry of the domains
$D_{\phi^{j}_s}$ with respect to the two-dimensional hyperplane $\R
\times \{0\} \times \R \subset \R^{3}$.

Theorem~\ref{teo1} supports the perception that the structure of the set of Serrin domains in a manifold $(\cM,g)$ is similar to the structure of closed embedded constant mean curvature hypersurfaces (CMC hypersurfaces in short) in $\cM$ up to a shift of the dimension. We recall that the rigity result in \cite{Serrin} for Serrin domains in $\R^N$ parallels an earlier important result by Alexandrov \cite{Alexandrov} stating that closed embedded $CMC$-hypersurfaces in $\R^N$ are round spheres. The result in \cite{Alexandrov} is based on the moving plane method in geometric form which is also refered to as Alexandrov's reflection principle. By the same technique, Alexandrov proved in \cite{Alexandrov-62} that any closed embedded $CMC$-hypersurface contained in a hemisphere of $S^N$ is a geodesic sphere. An explicit family of embedded $CMC$-hypersurfaces in $S^N$ with nonconstant principal curvatures has been found by Perdomo in \cite{Perdomo} in the case $N \ge 3$. These hypersurfaces seem somewhat related to the Serrin domains given by Theorem~\ref{teo1}, although they bound a tubular neighborhood of $S^1$ and not of $S^{N-1}$. We also emphasize that Theorem~\ref{teo1} applies in the case of $S^2$, whereas $CMC$-hypersurfaces in $S^2$ are obviously trivial, i.e., they are geodesic circles.

A similar dimensional shift is present in the case where $\cM= T^m \times \R^k$ with the flat metric, where $T^m = \R^m / 2 \pi \Z^m$ denotes the $m$-torus. In the recent paper \cite{Fall-Minlend-Weth}, the authors have constructed bounded Serrin domains in $\cM$ with nonconstant principal curvatures for arbitrary $m \ge 1$, $k \ge 1$. These domains clearly correspond to periodic unbounded domains in $\R^{m} \times \R^k$, and they should be seen as analogues of the classical one-parameter family of Delaunay hypersurfaces \cite{Delau} in the case $m=1$, $k=2$. Again we stress that there is no analogue in the two-dimensional case $m=k=1$; every bounded CMC-hypersurface in $S^1 \times \R$ is clearly trivial, i.e., it is of the form $S^1 \times \{t\}$ with $t \in \R$. On the other hand, the construction of Delaunay hypersurfaces has been generalized to higher dimensions in \cite{Hsiang-Yu}.

The richer structure of the set of Serrin domains in low dimensions can be understood as a consequence of the nonlocal nature of geometric optimization problems for overdetermined boundary value problems. As it will become clear in Section~\ref{sec:transf-probl-its} below, these problems naturally lead to the study of (nonlocal) Dirichlet-to-Neumann maps.

This phenomenon has already been studied in detail for other classes of overdetermined boundary value problems involving different elliptic equations. Starting with the pioneering papers of Sicbaldi\cite{Sic} and Hauswirth, H\'elein and Pacard \cite{hauswirth-et-al}, the construction of nontrivial domains giving rise to solutions of overdetermined problems has been performed in many specific settings, see e.g. \cite{del-Pino-pacard-wei,ScSi,Ros-Ruiz-Sicbaldi-2016,morabito-sicbaldi}. Moreover, rigidity results for these domains were derived in \cite{farina-valdinoci,farina-valdinoci:2010-1,farina-valdinoci:2010-2,farina-valdinoci:2013-1,Ros-Sicbaldi,Ros-Ruiz-Sicbaldi-2015,Traizet}.

We note that most of the work so far has been devoted to the
Euclidean setting $(\cM,g)=(\R^N,g_{eucl})$. In the very interesting
recent paper \cite{morabito-sicbaldi}, the more closely related case
$\cM= S^{N-1} \times \R$ is considered for the equation
$-\Delta_{\cM}u = \lambda u$, but the domains considered there are
unbounded with respect to $\R$-variable and thus very different from
subdomains of $S^N$. Note also that the Serrin domains obtained in  \cite{FM-Serr} are trivially geodesic spheres  by the rigidity result in  \cite{Ku-Pra}, since these domains are obtained by perturbing geodesic balls with small radius. In fact we are not aware of any other
construction of nontrivial subdomains of the sphere $S^N$ arising in
the context of overdetermined boundary value problems.

 Our approach for proving Theorem \ref{teo1} relies on the Crandall-Rabinowitz Bifurcation Theorem \cite[Theorem 1.7]{M.CR}.
 The main task is to reduce the problem to an operator equation in appropriate function spaces
 such that an associated family of linearized operators satisfy the spectral and Fredholm type transversality assumptions of this theorem.
 In particular, we need to derive precise information on the eigenvalues and eigenfunctions of related families of ODEs.

We explain the main steps while describing the organization of the paper. In Section~\ref{sec:transf-probl-its},

 we consider domains of the form $D_{\phi}$ given in (\ref{eq:def-D-phi}) with $\phi \in C^{2,\alpha}(S^{N-1})$, $0< \phi< \frac{\pi}{2}$
 and transform the corresponding overdetermined problem to an equivalent overdetermined boundary value
 problem on a fixed domain with a $\phi$-dependent metric.
 The solvability condition for this problem is then formulated as an operator equation in $\phi$ of the form $H(\phi) \equiv const$,
where $H: C^{2,\alpha}(S^{N-1}) \to C^{1,\alpha}(S^{N-1})$ is a nonlinear operator of Dirichlet-to-Neumann type.
We then compute its linearizations $\mbL_\lambda:=D H(\l)$ at constant functions $\phi \equiv \lambda$, $\lambda \in (0,\frac{\pi}{2})$
in the form of a classical Dirichlet-to-Neumann operator, see Proposition~\ref{sec:peri-solut-serr-2}. In Section~\ref{proposition-eigenvalues},
we then study the eigenvalues $\sigma_j(\lambda)$, $j \in \N \cup \{0\}$ and corresponding eigenfunctions of $\mbL_\lambda$ given by the spherical
harmonics of degree $j$. By a separation of variables ansatz using spherical harmonics, we represent these eigenvalues via boundary derivatives
of the solutions of an associated $\lambda$- and $j$-dependent family of second order ODEs, see (\ref{eq:problem4}) below.

However, this representation is not too useful to derive detailed qualitative information.
In Proposition~\ref{proposition-eigenvalues}, we therefore derive a different formula for the values $\sigma_j(\lambda)$
which only involves a $j$-dependent family of first order ODEs {\em in the parameter $\lambda$}.
This  somewhat surprising new role of the parameter $\lambda$ might be a consequence of a more general principle
that we are unaware of so far. In Section~\ref{sec:function-spaces}, we use the information on the eigenvalues and
eigenfunctions to deduce mapping properties of $\mbL_\lambda$ in Sobolev spaces.

In Section~\ref{sec:compl-proof-theor}, we then complete the proof of Theorem~\ref{teo1}
  via the Crandall-Rabinowitz Bifurcation Theorem.
  For this we need to restrict the operator $H$ and its linearizations $\mbL_\lambda$ to the subspace
  of axially symmetric functions in $C^{2,\alpha}(S^{N-1})$,
  because the intersections of this subspace with the eigenspaces of $\mbL_\lambda$ are one-dimensional, as required in \cite[Theorem 1.7]{M.CR}. We also note that, in order to ensure the transversality condition in \cite[Theorem 1.7]{M.CR}, we are led to show that $\frac{d\sigma_j(\lambda)}{d\lambda}>0$ for $j \ge 2$ and $\lambda \in (0,\frac{\pi}{2})$ with $\sigma_j(\lambda)=0$, see Proposition~\ref{eq:OK-egesff} in Section~\ref{sec:spectr-prop-line-1}.

Finally, we emphasize that Theorem~\ref{teo1} yields bifurcation of Serrin domains only at simple eigenvalues  $\s_j(\l_j)$ corresponding to axially symmetric spherical harmonics $Y_j$ of higher degree $j\geq 2$. Such a bifurcation cannot happen for $j=1$.  Indeed, as we shall see  in  Proposition \ref{proposition-eigenvalues} below,  $\s_1(\l)<0$, for $\l\in (0,\frac{\pi}{2})$, while this eigenvalue corresponds to the (nonconstant) axially symmetric eigenfunction $x\mapsto Y_1(x)=x_1$.
This contrasts our paper \cite{Fall-Minlend-Weth} devoted to $\cM= T^m \times \R^k$, where bifurcation of Serrin's domain occurs at the every simple eigenvalue corresponding to  a nonconstant eigenfunction.\\

\textbf{Acknowledgement:}
M.M.F. is supported by the Alexander von Humboldt Foundation. Part of the paper was written while I.A.M. and M.M F. visited the Institute of Mathematics of the Goethe-University Frankfurt. They wish to thank the institute for its hospitality and the German Academic Exchange Service (DAAD) for funding of the visit of I.A.M. within the program 57060778. Also T.W. wishes to thank DAAD for funding within the program 57060778.
The authors also thank Wolfgang Reichel for drawing their attention to overdetermined problems on the $N$-sphere.\\
The authors would like to thank the referee for carefully reading the paper and for his/her valuable comments.

\section{The transformed problem and its linearization}
\label{sec:transf-probl-its}

Consider the $N$-sphere $S^{N}\subset\mathbb{R}^{N+1}$. The open subset $S^N \setminus \{\pm e_{N+1}\}$ is locally parameterized by
\begin{align*}
 \Upsilon:  S^{N-1}\times (-\frac{\pi}{2},\frac{\pi}{2})&\longrightarrow
S^{N},\quad (\sigma,\theta)\mapsto \bigl((\cos \theta)\sigma, \sin
\theta \bigr).
 \end{align*}
The standard metric of  $S^{N}$ in coordinates $(\sigma,\theta)$ is given by
\begin{equation}\label{eq:me}
g_{(\sigma,\theta)}=\cos^2 \theta \,
g_{S^{N-1}}(\sigma)+\textrm{d}\theta^2,
\end{equation}
where $g_{S^{N-1}}$ denotes the standard metric on
$S^{N-1}$. In these coordinates, the corresponding Laplace-Beltrami operator of $S^{N}$ writes as
\begin{equation}\label{eq:lapla}
\Delta_{g_{(\sigma,\theta)}}=\frac{1}{\cos^{N-1} \theta}\: \frac{\partial}{\partial
\theta}\biggl(\cos^{N-1} \theta \:\frac{\partial }{\partial
\theta}\biggl)+\frac{1}{\cos^{2} \theta}\Delta_{S^{N-1}}.
\end{equation}
We fix $\alpha \in (0,1)$ in the following.  For $j \in \N \cup \{0\}$,  we consider the Banach space $C^{j,\alpha}(S^{N-1})$, and we let
$$
\cU:= \{ \phi \in C^{2,\alpha}(S^{N-1}) \::\: 0< \phi < \frac{\pi}{2}
\}.
$$
For a function $\phi \in \cU$, we define
$$
\O_\phi:=\left\{ (\sigma,\theta ) \in S^{N-1} \times  (-\frac{\pi}{2},\frac{\pi}{2}) \,:\,
|\theta| < \phi(\sigma)  \right\}.
$$
We note  that $ \Upsilon(\O_\phi)=D_\phi,$ with $D_\phi$ defined in (\ref{eq:def-D-phi}). We are therefore led to consider the problem
\begin{equation}\label{eq:Proe}
\left\{
 \begin{aligned}
-\Delta_{g_{(\sigma,\theta)}} u&= 1 &&\qquad \textrm{in $\O_\phi$},\\
u&=0  &&\qquad \textrm{on $\partial \O_\phi.$}
  \end{aligned}
\right.
  \end{equation}
We are looking for $\phi \in \cU$ such that the unique solution of
\eqref{eq:Proe} satisfies the additional boundary condition
\begin{equation}
\label{eq:Proe1-1-neumann-0}
\partial_{\mu_\phi} u= -c \quad\textrm{ on }\quad \partial \O_\phi,
\end{equation}
where $\mu_\phi$ is the outer unit normal on $\partial \O_\phi$ with respect to the metric $g_{(\sigma,\theta)}$, which is given by
\begin{equation}
  \label{eq:eta-normal}
\mu_\phi(\sigma,\theta) = \frac{(-\frac{\nabla \phi(\sigma)}{\cos \theta}, \frac{\theta}{|\theta|})}{\sqrt{1+ |\nabla \phi(\sigma)|^2}} \;\in \: T_{\sigma}S^{N-1} \times \R \qquad \text{for $(\sigma,\theta ) \in \partial \O_\phi$.}
\end{equation}
Here and in the following, $\nabla \phi(\sigma) \in T_{\sigma}S^{N-1}$ denotes the gradient with respect to $g_{S^{N-1}}$.

In the following, we wish to pull back the problem~(\ref{eq:Proe}) to a problem on a fixed domain. For this we note that every $\phi \in \cU$ gives rise to a  $C^{2,\alpha}$-regular map
$$
\Psi_\phi: S^{N-1} \times \mathbb{R}  \to S^{N-1} \times \mathbb{R} ,
\quad \qquad \Psi_\phi(\sigma, t)= (\sigma , \phi(\sigma)t)
$$
such
that $\Psi_\phi$ maps $$\O:=S^{N-1} \times (-1,1)$$ diffeomorphically
onto $\O_{\phi}.$  Let the metric $g_\phi$ be defined as the pull
back of the metric $g_{(\sigma,\theta)}$ under
the map $\Psi_\phi$, so that $\Psi_{\phi}:(\overline \O,g_\phi) \to
(\overline \O_{\phi},g_{(\sigma,\theta)})$ is an isometry.  Then our
problem is equivalent to the overdetermined problem consisting of
\begin{equation}\label{eq:Proe1-1}
\left\{
 \begin{aligned}
    -\Delta_{g_\phi} u&= 1&&\qquad \textrm{in $\O$,}\\
u&=0&&\qquad \textrm{on $\partial \O$}\\
  \end{aligned}
\right.
  \end{equation}
and the additional Neumann condition
\begin{equation}
\label{eq:Proe1-1-neumann}
\partial_{\nu_{\phi}} u= -c \quad\textrm{ on }\quad \partial \O.
\end{equation}
Here
$$
\nu_\phi: \partial \O \to \R^{N+1}
$$
is the unit outer normal at $\partial \O \subset S^{N-1} \times \R \subset \R^{N+1}$ with respect to $g_\phi$.
Since $\Psi_\phi: (\overline \Omega, g_\phi) \to (\overline {\Omega_\phi}, g_{(\sigma,\theta)})$ is an isometry, we have
\begin{equation}
  \label{eq:rel-eta-phi-nu-phi}
[d \Psi_\phi ]\nu_{\phi} =  \mu_\phi \circ \Psi_\phi  \qquad \text{on $\partial \Omega$}
\end{equation}
with $\mu_\phi$ is given in (\ref{eq:eta-normal}).

Here and in the following, we distinguish different types
of derivatives in our notation. If $f: O \to \R^\ell$ is a $C^1$-map defined on an open set $O \subset S^{N-1} \times \R$,
we write $d f(\sigma,t) \in \cL(T_\sigma S^{N-1} \times \R, \R^\ell)$ for its derivative at a point  $(\sigma,t) \in O$.
In contrast, we shall use the symbols $D$ or $D_\phi$ to denote functional derivatives. More precisely,
if $X,Y$ are infinite dimensional normed (function) spaces and $F \in C^1(O, Y)$, where $O \subset X$ is open,
we let $D F(\phi)$ or $D_\phi F(\phi) \in \cL(X,Y)$ denote the Fr\'echet derivative of $F$ at a function $\phi \in O$.

The following lemma is concerned with the well-posedness of problem \eqref{eq:Proe1-1}.
\begin{Lemma}
\label{sec:peri-solut-serr-1} For any $\phi \in \cU$, there is a
unique solution $u_\phi \in C^{2,\alpha}(\overline \O)$ of
\eqref{eq:Proe1-1}, and the map
\begin{equation}
  \label{eq:def-smooth-map}
\cU \to C^{2,\alpha}(\overline \O),\qquad \phi \mapsto u_\phi
\end{equation}
is smooth. Moreover we have the following properties.
\begin{itemize}
\item[(i)] For any $\phi \in \cU$, the functions $u_\phi: \overline \Omega \to \R$ and $\partial_{\nu_\phi} u_\phi: \partial \O \to \R$ are even in the variable  $t \in (-1,1)$.
\item[(ii)] If $\phi= \phi(\sigma,t)$ is axially symmetric in $\sigma \in S^{N-1}$, then the functions $u_\phi: \overline \Omega \to \R$ and $\partial_{\nu_\phi} u_\phi: \partial \O \to \R$ are axially symmetric in $\sigma$ as well.
\end{itemize}
\end{Lemma}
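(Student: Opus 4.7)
The overall plan is to treat \eqref{eq:Proe1-1} as a standard linear Dirichlet problem on the fixed cylinder $\overline{\O}=S^{N-1}\times[-1,1]$ with $\phi$-dependent coefficients, apply Schauder theory for well-posedness, deduce the smooth dependence via the implicit function theorem, and then exploit uniqueness together with obvious isometries of $(\overline{\O},g_\phi)$ to obtain the symmetry statements (i) and (ii).

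First I would verify that the pullback metric $g_\phi=\Psi_\phi^* g_{(\sigma,\theta)}$ is of class $C^{1,\alpha}$ on $\overline{\O}$ and uniformly elliptic for every $\phi\in\cU$: since $\Psi_\phi(\sigma,t)=(\sigma,\phi(\sigma)t)$ is a $C^{2,\alpha}$-diffeomorphism onto $\overline{\Omega_\phi}$ and $g_{(\sigma,\theta)}$ from \eqref{eq:me} is smooth with $\cos^2\theta$ bounded away from zero on $\overline{\Omega_\phi}$ (because $\phi<\tfrac{\pi}{2}$), the associated operator $-\Delta_{g_\phi}$ has $C^{0,\alpha}$ coefficients and is uniformly elliptic. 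Existence and uniqueness of $u_\phi\in C^{2,\alpha}(\overline{\O})$ then follow from classical Schauder theory for Dirichlet problems on smooth domains.

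For smoothness of $\phi\mapsto u_\phi$, I would set up the map
\begin{equation*}
F:\cU\times\bigl\{v\in C^{2,\alpha}(\overline{\O})\,:\,v=0\text{ on }\partial\O\bigr\}\to C^{0,\alpha}(\overline{\O}),\qquad F(\phi,v)=-\Delta_{g_\phi}v-1,
\end{equation*}
observe that $\phi\mapsto g_\phi$ (and hence the coefficients of $-\Delta_{g_\phi}$) is smooth as a map into $C^{1,\alpha}$ tensor fields, and note that the partial derivative $D_vF(\phi,u_\phi)=-\Delta_{g_\phi}$ is an isomorphism between the above spaces. The implicit function theorem then yields the smoothness of \eqref{eq:def-smooth-map}.

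For the symmetry properties, I would show that the relevant maps are isometries of $(\overline{\O},g_\phi)$ and invoke uniqueness. Part (i): the reflection $\tau(\sigma,t)=(\sigma,-t)$ satisfies $\Psi_\phi\circ\tau=R\circ\Psi_\phi$ with $R(\sigma,\theta)=(\sigma,-\theta)$, and $R$ preserves $g_{(\sigma,\theta)}$ because $\cos^2\theta$ is even in $\theta$. Hence $\tau^*g_\phi=g_\phi$, so $u_\phi\circ\tau$ solves \eqref{eq:Proe1-1}, and uniqueness gives $u_\phi\circ\tau=u_\phi$. Since $\tau$ maps the outer normal at $(\sigma,-1)$ to the outer normal at $(\sigma,1)$ in a $g_\phi$-isometric way, the chain rule applied to the identity $u_\phi=u_\phi\circ\tau$ yields the evenness of $\partial_{\nu_\phi}u_\phi$ in $t$. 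Part (ii) is entirely analogous: for $A\in O(N)$ with $Ae_1=e_1$, axial symmetry of $\phi$ gives $\phi\circ A=\phi$, so the map $\rho_A(\sigma,t)=(A\sigma,t)$ satisfies $\rho_A^*g_\phi=g_\phi$, and uniqueness of the solution transfers the symmetry to $u_\phi$ and, as above, to $\partial_{\nu_\phi}u_\phi$.

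The routine part is the Schauder/IFT argument; the only step requiring a bit of care is verifying that the outer $g_\phi$-unit normal behaves naturally under the isometries $\tau$ and $\rho_A$ so that one legitimately concludes the invariance of $\partial_{\nu_\phi}u_\phi$ rather than merely of $u_\phi$, but this follows directly from the isometry property and the explicit formula \eqref{eq:eta-normal} combined with \eqref{eq:rel-eta-phi-nu-phi}.
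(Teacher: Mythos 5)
Your proof is correct and follows essentially the same route as the paper: establish well-posedness of the linear Dirichlet problem on the fixed cylinder, deduce smooth dependence on $\phi$ from the smoothness of the coefficient map, and obtain (i) and (ii) from uniqueness. The only cosmetic difference is that the paper concludes smooth dependence from $u_\phi=\inv(m(\phi))1$ and the smoothness of operator inversion on $\cI(X,Y)$, whereas you invoke the implicit function theorem on $F(\phi,v)=-\Delta_{g_\phi}v-1$; for a linear problem these are interchangeable. Your isometry argument for (i) and (ii), including the transfer to $\partial_{\nu_\phi}u_\phi$, spells out what the paper leaves as "follows from uniqueness."
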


\proof
Let
$$
X:= \{u \in C^{2,\alpha}(\overline \O)\::\: u = 0 \;\text{on $\partial \O$}\}\qquad \text{and}\qquad Y= C^{0,\alpha}(\overline \O).
$$
Moreover, let $\cL(X,Y)$ denote the space of bounded linear operators $X \to Y$, and let $\cI(X,Y) \subset \cL(X,Y)$ denote the set of topological isomorphisms $X \to Y$.  Since the metric coefficients of $g_\phi$ are smooth functions of $\phi$ and $d \phi$, it is easy to see that the map
$$
m: \cU \to \cL(X,Y), \qquad \phi \to m(\phi):= -\Delta_{g_\phi}
$$
is smooth. Moreover, for $\phi \in \cU$, the definition of $g_\phi$ implies that $\Delta_{g_\phi}$ is an elliptic, coercive second order differential operator in divergence form with $C^{1,\alpha}(\overline \O)$-coefficients. This readily implies, by the maximum principle and elliptic regularity, that $m(\phi) \in \cI(X,Y)$ for every $\phi \in \cU$, and consequently the problem (\ref{eq:Proe1-1}) has a unique solution $u_\phi \in X$ for every $\phi \in \cU$. We now recall that $\cI(X,Y) \subset \cL(X,Y)$ is an open set and that the inversion
$$
{\rm inv}: \cI(X,Y) \to \cI(Y, X), \quad {\rm inv}(A)= A^{-1}
$$
is smooth. Since $u_\phi= \inv(m(\phi)) 1$, the smoothness of the map in (\ref{eq:def-smooth-map}) follows.\\
Finally, properties (i) and (ii) follows from the uniqueness of $u_\phi$.
\QED

By Lemma~\ref{sec:peri-solut-serr-1}, condition (\ref{eq:Proe1-1-neumann}) is equivalent to
\begin{equation}
  \label{eq:reformulated-overd}
[\partial_{\nu_{\phi}} u_{\phi}](\sigma,1)=-c \qquad \text{for $\sigma  \in S^{N-1}$.}
\end{equation}
It follows from (\ref{eq:eta-normal}) and (\ref{eq:rel-eta-phi-nu-phi}) that the map
$$
\cU  \to C^{1,\alpha}(\partial \O,\R^{N+1}), \qquad \phi \mapsto \nu_\phi
$$
is smooth, and thus we have a smooth map
\begin{equation}
  \label{eq:def-H}
H: \cU \to C^{1,\alpha}(S^{N-1}), \qquad H(\phi)(\sigma)= \partial_{\nu_{\phi}} u_{\phi} (\sigma,1).
\end{equation}
Moreover, (\ref{eq:reformulated-overd}) writes as
\begin{equation}
  \label{eq:reformulated-overd-H}
H(\phi)\equiv -c  \qquad \text{on $S^{N-1}$}.
\end{equation}
In order to find solutions of the latter equation bifurcating from the trivial branch of
solutions $\phi \equiv \lambda$, $\lambda>0$, we need to study the linearization of $H$ at constant functions. In the next lemma, we collect the form of various quantities in the case $\phi \equiv \lambda$ for later use.

\begin{Lemma}
\label{eq: phi=lambda-lemma}
In the case where $\phi \equiv \lambda \in (0,\frac{\pi}{2})$ is a constant, we have
\begin{align*}
\Omega_\lambda &= S^{N-1} \times ]-\lambda,\lambda[,\\
\mu_\lambda(\sigma,\theta)&= (0,\text{\rm sgn} \,\theta)\; \in T_\sigma S^{N-1} \times \R \qquad \text{for $(\sigma,\theta) \in \partial \Omega_\lambda$,}\\
\Psi_\lambda(\sigma,t)&= (\sigma,\lambda t)\qquad \qquad \qquad \qquad \qquad \;\text{for $(\sigma,t) \in S^{N-1} \times \R$},\\
g_{\lambda}(\sigma,t)&=\lambda^{2}\text{\rm d}t^{2}+\cos^{2}(\lambda
t)g_{S^{N-1}}(\sigma) \qquad \; \text{for $(\sigma,t) \in \overline{\Omega}$},\\
\nu_\lambda(\sigma,t)&= (0,\frac{\text{\rm sgn}(t)}{\lambda})\; \in T_\sigma S^{N-1} \times \R \quad \;\; \;\text{for $(\sigma,t) \in \partial \Omega$  }
\end{align*}
and
\begin{equation}
  \label{eq:laplace-lambdaa}
\Delta_{g_\lambda}  =  \frac{1}{\lambda^{2}\cos^{N-1} (\lambda t)} \partial_t \Bigl( \cos^{N-1}(\lambda t) \partial_t\Bigr)  + \frac{1}{\cos^2 (\lambda t)}\Delta_{S^{N-1}},
\end{equation}
where $\Delta_{S^{N-1}}$ acts only on the $\sigma$-variable. Moreover, we have
\begin{equation}
  \label{eq:form-u-lambda}
 u_\lambda(\sigma,t)= \tilde u(\lambda t)-\tilde u(\lambda) \qquad  \text{for $(\sigma,t) \in \overline{\Omega}$},\\
\end{equation}
where $\tilde u: (-\frac{\pi}{2},\frac{\pi}{2}) \to \R$ is the even solution of the ODE
\begin{equation}\label{eq: lapalsolini}
\frac{1}{\cos^{N-1}
\theta}\frac{\partial}{\partial\theta}\Bigl( \cos^{N-1}
\theta\frac{\partial}{\partial\theta} \Bigr)\tilde u(\theta)
= -1, \qquad \text{$\theta \in
(-\frac{\pi}{2},\frac{\pi}{2})$,}
\end{equation}
which is unique up to an additive constant. Finally, we have
\begin{equation}
  \label{calcH-lambda}
 H(\lambda) = \tilde u'(\lambda).
\end{equation}
\end{Lemma}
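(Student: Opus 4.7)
My plan is to verify each formula as a direct unwinding of definitions from the preceding setup; there is no serious analytic obstacle here, and the only care required is in the normal-vector bookkeeping and in reducing the PDE to the stated ODE. I would proceed in three short blocks.

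First, I would read off the first four formulas from the definitions already in place. Setting $\phi\equiv\lambda$ in the definition of $\Omega_\phi$ gives $\Omega_\lambda=S^{N-1}\times(-\lambda,\lambda)$. Substituting $\nabla\phi\equiv 0$ and $|\theta|=\lambda$ in the explicit expression (\ref{eq:eta-normal}) for $\mu_\phi$ yields $\mu_\lambda(\sigma,\theta)=(0,\mathrm{sgn}\,\theta)$. The expression $\Psi_\lambda(\sigma,t)=(\sigma,\lambda t)$ is immediate from $\Psi_\phi(\sigma,t)=(\sigma,\phi(\sigma)t)$. For the metric, since $d\Psi_\lambda(\sigma,t)(v,s)=(v,\lambda s)$, pulling back the ambient metric (\ref{eq:me}) evaluated at $\theta=\lambda t$ gives precisely $g_\lambda(\sigma,t)=\lambda^2\,dt^2+\cos^2(\lambda t)\,g_{S^{N-1}}(\sigma)$.

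Next, I would identify $\nu_\lambda$ and the Laplacian. The relation (\ref{eq:rel-eta-phi-nu-phi}) forces $[d\Psi_\lambda]\nu_\lambda=\mu_\lambda\circ\Psi_\lambda=(0,\mathrm{sgn}\,t)$ at points of $\partial\Omega$ where $t=\pm 1$; since $d\Psi_\lambda$ sends $(0,s)$ to $(0,\lambda s)$, we get $\nu_\lambda=(0,\mathrm{sgn}(t)/\lambda)$, and a direct check shows $g_\lambda(\nu_\lambda,\nu_\lambda)=\lambda^2\cdot\lambda^{-2}=1$, confirming it is the unit outer normal. For the Laplacian, I would start from (\ref{eq:lapla}) in the $(\sigma,\theta)$ variables, apply the isometry $\Psi_\lambda$ which sends $\theta=\lambda t$ (so $\partial_\theta=\lambda^{-1}\partial_t$), and collect the factors to obtain (\ref{eq:laplace-lambdaa}).

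Finally, for $u_\lambda$ and $H(\lambda)$, I would invoke part (ii) of Lemma~\ref{sec:peri-solut-serr-1} together with the fact that a constant $\phi\equiv\lambda$ is invariant under every rotation of $S^{N-1}$, so by the uniqueness of the solution, $u_\lambda$ depends only on $t$. The equation $-\Delta_{g_\lambda}u_\lambda=1$ then collapses, via (\ref{eq:laplace-lambdaa}), to the ODE
\begin{equation*}
-\frac{1}{\lambda^2\cos^{N-1}(\lambda t)}\,\partial_t\bigl(\cos^{N-1}(\lambda t)\,\partial_t u_\lambda\bigr)=1,
\end{equation*}
and the substitution $\theta=\lambda t$ reduces this to (\ref{eq: lapalsolini}) for the function $\theta\mapsto u_\lambda(\sigma,\theta/\lambda)$. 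The boundary condition $u_\lambda=0$ at $t=\pm 1$ together with evenness in $t$ (part (i) of Lemma~\ref{sec:peri-solut-serr-1}) pins down the representative $\tilde u$ up to a constant and yields (\ref{eq:form-u-lambda}). For (\ref{calcH-lambda}), I differentiate: $\partial_t u_\lambda(\sigma,t)=\lambda\tilde u'(\lambda t)$, so
\begin{equation*}
H(\lambda)=\partial_{\nu_\lambda}u_\lambda(\sigma,1)=\frac{1}{\lambda}\cdot\lambda\,\tilde u'(\lambda)=\tilde u'(\lambda),
\end{equation*}
as claimed. The only mild subtlety I want to be careful about is the consistent use of the two normals $\mu_\lambda$ and $\nu_\lambda$ and the factor $\lambda$ they produce; otherwise every step is a substitution.
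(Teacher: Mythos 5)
Your proof is correct and follows essentially the paper's own route: read the first four formulas off the definitions, compute the transformed Laplacian, and then obtain $u_\lambda$ and $H(\lambda)$ by uniqueness and a direct differentiation. The only (cosmetic) deviations are that for \eqref{eq:laplace-lambdaa} you pull back the already-known operator \eqref{eq:lapla} under $\theta=\lambda t$ whereas the paper recomputes $\Delta_{g_\lambda}$ from $|g_\lambda|$ and $g_\lambda^{-1}$, and that you derive \eqref{eq:form-u-lambda} by reducing the PDE to the ODE via symmetry rather than simply verifying the candidate and invoking uniqueness — both variants are equivalent.
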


\proof
The expressions for $\Omega_\lambda,\mu_\lambda, \Psi_\lambda$, $g_\lambda$ and $\nu_\lambda$ are immediate. To prove (\ref{eq:laplace-lambdaa}), we note that
$$
|g_{\lambda}(\sigma,t)|=\lambda^{2}\cos^{2(N-1)}(\lambda t)|g_{S^{N-1}}|
$$
and
\begin{equation}\label{matrix1}
g^{-1}_{\lambda}(\sigma,t)=
 \begin{pmatrix}
  \cos^{-2}(\lambda t)g^{-1}_{S^{N-1}}(\sigma) &  0 \\
0  &\lambda^{-2}
\end{pmatrix}
\qquad \text{for $(\sigma,t) \in \overline{\Omega}$},
\end{equation}
which implies that
\begin{align*}
\Delta_{g_\lambda}=&\frac{1}{\sqrt{|g_{\lambda}|}}\partial_{i}\left(\sqrt{|g_{\lambda}|}g^{ij}_{\lambda}\partial_j\right)=\frac{1}{\lambda\cos^{N-1}(\lambda
t)\sqrt{|g_{S^{N-1}}|}} \\
& \cdot \Bigl[ \partial_{t}\Bigl( \frac{\cos^{N-1}(\lambda
t)}{\lambda} \sqrt{|g_{S^{N-1}}|} \partial_t\Bigr) +
\partial_{i}\Bigl( \lambda \cos^{N-3}(\lambda
t)\sqrt{|g_{S^{N-1}}|} g^{ij}_{S^{N-1}}(\sigma)\partial_j\Bigr)\Bigr]\\
=& \frac{1}{\lambda^{2}\cos^{N-1} (\lambda t)}
\partial_t \Bigl( \cos^{N-1}(\lambda t) \partial_t\Bigr) +\frac{1}{\cos^2(\lambda t) \sqrt{|g_{S^{N-1}}|}}\partial_{i}\biggl( \sqrt{|g_{S^{N-1}}|}g^{ij}_{S^{N-1}}(\sigma)\partial_j\biggl)\\
=&\frac{1}{\lambda^{2}\cos^{N-1} (\lambda t)}
\partial_t \Bigl( \cos^{N-1}(\lambda t) \partial_t\Bigr)  +
\frac{1}{\cos^2 (\lambda t)}\Delta_{S^{N-1}},
\end{align*}
as claimed in (\ref{eq:laplace-lambdaa}).

Next, let $u_\lambda$ be given as in  \eqref{eq:form-u-lambda}. Then, as a consequence of (\ref{eq:laplace-lambdaa}), $u_\lambda$ solves the Dirichlet problem (\ref{eq:Proe1-1}) with $\phi \equiv \lambda$, and this shows that the formula in \eqref{eq:form-u-lambda} is correct. Finally, we have
$$
 H(\lambda) = \partial_{\nu_\lambda}u_\lambda = \tilde u'(\lambda)
$$
as a consequence of (\ref{eq:form-u-lambda}), the fact that $\partial_{\nu_\lambda} =  \frac{\text{\rm sgn}(t)}{\lambda}\partial_t$ and since $\tilde{u}$ is even.
\QED

We have the following estimate for the first  derivative of the even solution $\tilde u$ of \eqref{eq: lapalsolini}.
\begin{Lemma}\label{eq: solappro}
We have
\begin{equation}\label{ine der}
-\tilde{u}'(\theta)\geq \theta  \quad \textrm{for $\theta\in (0,\frac{\pi}{2})$. }
\end{equation}
\end{Lemma}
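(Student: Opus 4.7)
The plan is to convert the second-order ODE into an explicit first-order formula for $\tilde u'$ and then reduce the claimed inequality to a monotonicity statement about $\cos^{N-1}$.

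First I would rewrite the equation $\frac{1}{\cos^{N-1}\theta}\frac{\partial}{\partial\theta}\bigl(\cos^{N-1}\theta\,\tilde u'(\theta)\bigr) = -1$ as
\begin{equation*}
\frac{\partial}{\partial\theta}\bigl(\cos^{N-1}\theta\,\tilde u'(\theta)\bigr) = -\cos^{N-1}\theta.
\end{equation*}
Since $\tilde u$ is even, $\tilde u'(0)=0$. Integrating from $0$ to $\theta\in(0,\frac{\pi}{2})$ then yields
\begin{equation*}
-\tilde u'(\theta) = \frac{1}{\cos^{N-1}\theta}\int_0^\theta \cos^{N-1}s\, ds.
\end{equation*}

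Next, to establish $-\tilde u'(\theta)\ge \theta$ it suffices to prove
\begin{equation*}
\int_0^\theta \cos^{N-1}s\, ds \;\ge\; \theta\,\cos^{N-1}\theta.
\end{equation*}
Because $N\ge 2$, the function $s\mapsto \cos^{N-1}s$ is strictly decreasing on $(0,\frac{\pi}{2})$, so $\cos^{N-1}s\ge \cos^{N-1}\theta$ for every $s\in[0,\theta]$. Integrating this pointwise inequality over $[0,\theta]$ gives the desired bound.

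There is essentially no obstacle; the only points to be careful about are (a) using the evenness of $\tilde u$ to eliminate the constant of integration (otherwise the formula for $\tilde u'$ would not be pinned down), and (b) noting that the standing hypothesis $N\ge 2$ is what makes $\cos^{N-1}$ strictly monotone on the relevant interval, so in fact the inequality in \eqref{ine der} is strict for $\theta\in (0,\frac{\pi}{2})$.
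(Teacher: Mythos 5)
Your proof is correct and follows essentially the same argument as the paper: integrate the ODE using the evenness of $\tilde u$ to get $-\tilde u'(\theta)=\cos^{-(N-1)}\theta\int_0^\theta\cos^{N-1}s\,ds$, then bound the integrand below by $\cos^{N-1}\theta$.
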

\proof
Since $\tilde u$ is even, we have $\tilde u'(0)=0$ and therefore
$$
\cos^{N-1}
\theta \: \tilde u'(\theta)= -\int_0^\theta \cos^{N-1} \vartheta \: d \vartheta \qquad \text{for $\theta \in (-\frac{\pi}{2},\frac{\pi}{2})$}
$$
by \eqref{eq: lapalsolini}. Consequently,
$$
-\tilde u'(\theta)= \int_0^\theta \frac{\cos^{N-1} \vartheta }{\cos^{N-1} \theta }d\vartheta \ge  \theta \qquad \text{for $\theta \in (0,\frac{\pi}{2})$.}
$$
 \QED

 The following is the main result of this section.
\begin{Proposition}
\label{sec:peri-solut-serr-2} Let $H: \cU \to C^{1,\alpha}(S^{N-1})$ be the smooth map defined  in (\ref{eq:def-H}). At a constant function $\lambda \in
(0,\frac{\pi}{2})$, the operator $\mbL_\lambda : = D H(\lambda) \in
\cL(C^{2,\alpha}(S^{N-1}), C^{1,\alpha}(S^{N-1}))$ is given by
\begin{equation}
  \label{eq:diff-H-express}
[\mbL_\lambda \omega] (\sigma) = -\frac{\tilde u'(\lambda)}{\lambda}
\partial_{t}\, \psi_{\omega,\lambda}(\sigma,1)+\tilde u''(\lambda)
\omega(\sigma) \qquad \text{for $\omega \in C^{2,\alpha}(S^{N-1})$,
$\sigma \in S^{N-1}$,}
\end{equation}
where $\psi_{\omega,\lambda} \in C^{2,\alpha}(\overline \O)$ is the unique solution of the problem
\begin{equation}
  \label{eq:psi-omega}
\left \{
  \begin{aligned}
 &\Delta_{g_\lambda} \psi_{\omega,\lambda} =0 && \qquad \text{in  $\O$},\\
&\psi_{\omega,\lambda}(\sigma,t)= \omega(\sigma)  && \qquad \text{for $(\sigma,t) \in \partial \O$,}
  \end{aligned}
\right.
\end{equation}
and
\begin{equation}
  \label{eq:laplace-lambda}
\Delta_{g_\lambda}  =  \frac{1}{\lambda^{2}\cos^{N-1} (\lambda t)} \partial_t \Bigl( \cos^{N-1}(\lambda t) \partial_t\Bigr)  + \frac{1}{\cos^2 (\lambda t)}\Delta_{S^{N-1}}.
\end{equation}
 Here $\Delta_{S^{N-1}}$ acts only on the $\sigma$-variable.
\end{Proposition}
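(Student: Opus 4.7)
My plan is to reduce the computation of $DH(\lambda)$ to a shape-derivative calculation in the $(\sigma,\theta)$-picture, where the reference solution $U_\lambda(\sigma,\theta) := \tilde u(\theta) - \tilde u(\lambda)$ on $\Omega_\lambda$ depends only on $\theta$. The key advantage is that in this picture the equation $-\Delta_{g_{(\sigma,\theta)}} U_\phi = 1$ has a $\phi$-independent right-hand side, so the shape derivative will be harmonic and can be identified (up to a multiplicative constant) with $\psi_{\omega,\lambda}$ via the rescaling isometry $\Psi_\lambda(\sigma,t)=(\sigma,\lambda t)$.

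Concretely, I fix $\omega \in C^{2,\alpha}(S^{N-1})$, set $\phi_s := \lambda + s\omega$, and introduce $U_{\phi_s} := u_{\phi_s}\circ \Psi_{\phi_s}^{-1}$ on $\Omega_{\phi_s}$. From the identity $u_{\phi_s}(\sigma,t) = U_{\phi_s}(\sigma,\phi_s(\sigma) t)$ together with the $C^{2,\alpha}$-smoothness of $\phi \mapsto u_\phi$ provided by Lemma~\ref{sec:peri-solut-serr-1}, I expand
\[
\dot u_\omega(\sigma, t) := \partial_s u_{\phi_s}(\sigma,t)\big|_{s=0} = u'(\sigma, \lambda t) + t\omega(\sigma)\tilde u'(\lambda t),
\]
which unambiguously defines the shape derivative $u' \in C^{2,\alpha}(\overline{\Omega_\lambda})$. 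Linearizing $-\Delta_{g_{(\sigma,\theta)}} U_{\phi_s} = 1$ and the Dirichlet condition $U_{\phi_s}(\sigma,\pm\phi_s(\sigma)) = 0$ then yields the Hadamard-type problem
\[
-\Delta_{g_{(\sigma,\theta)}} u' = 0 \text{ in } \Omega_\lambda, \qquad u'(\sigma,\pm\lambda) = -\omega(\sigma)\tilde u'(\lambda) \text{ on }\partial\Omega_\lambda,
\]
and pulling back via $\Psi_\lambda$ identifies $u'(\sigma,\lambda t)$ with $-\tilde u'(\lambda)\psi_{\omega,\lambda}(\sigma,t)$, giving the crucial identity $\partial_\theta u'(\sigma,\lambda) = -\frac{\tilde u'(\lambda)}{\lambda}\partial_t \psi_{\omega,\lambda}(\sigma,1)$.

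I will then differentiate $H(\phi_s)(\sigma) = dU_{\phi_s}|_{(\sigma,\phi_s(\sigma))}\bigl(\mu_{\phi_s}(\sigma,\phi_s(\sigma))\bigr)$ at $s=0$ by the product rule, producing three contributions: (a) from the variation of $U$, equal to $\partial_\theta u'(\sigma,\lambda)$; (b) from the moving evaluation point $(\sigma,\phi_s(\sigma))$, equal to $\omega(\sigma)\partial_\theta^2 U_\lambda(\sigma,\lambda) = \tilde u''(\lambda)\omega(\sigma)$; and (c) from the varying unit normal $\mu_{\phi_s}$. Computing $\dot \mu_\omega := \partial_s \mu_{\phi_s}(\sigma,\phi_s(\sigma))|_{s=0}$ from \eqref{eq:eta-normal}, the identity $\nabla\phi|_{\phi=\lambda}=0$ forces $\dot\mu_\omega$ to have vanishing $\theta$-component, and since $dU_\lambda$ is a pure $d\theta$-form (because $U_\lambda$ has no $\sigma$-dependence), contribution (c) vanishes. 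Summing (a) and (b) with the identification above yields exactly \eqref{eq:diff-H-express}. The main technical point is the rigorous construction and regularity of the shape derivative $u'$ on $\overline{\Omega_\lambda}$, which is not directly supplied by Lemma~\ref{sec:peri-solut-serr-1}; my approach circumvents this by defining $u'$ through the expansion of $\dot u_\omega \in C^{2,\alpha}(\overline\Omega)$ above, so that $u'$ inherits the required regularity from the diffeomorphism $\Psi_\lambda$ and the explicit correction term $t\omega(\sigma)\tilde u'(\lambda t)$.
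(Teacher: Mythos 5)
Your argument is correct, and it takes a genuinely different route from the paper, namely the classical Hadamard shape-derivative computation carried out directly in the $(\sigma,\theta)$-picture. The paper instead stays on the fixed domain $\Omega$ throughout: it introduces the explicit $\phi$-dependent particular solution $u^\phi(\sigma,t)=\tilde u(\phi(\sigma)t)$ and studies the $g_\phi$-harmonic correction $a_\phi=u_\phi-u^\phi$, then splits $H(\phi)=\partial_{\nu_\phi}a_\phi+\partial_{\nu_\phi}u^\phi$ and linearizes each summand, handling the second summand in Lemma~\ref{sec:peri-solut-serr} and the first via the fixed-domain chain rule of Lemma~\ref{sec:peri-solut-serr-3}. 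Your shape derivative $u'$ corresponds precisely to the paper's $\tau_{\omega,\lambda}=[D_\phi|_{\phi=\lambda}a_\phi]\omega$ through the identity $\tau_{\omega,\lambda}(\sigma,t)=u'(\sigma,\lambda t)$, and your contributions (b) and (c) together reproduce the content of Lemma~\ref{sec:peri-solut-serr}, with the vanishing of (c) established the same way in substance: the $\theta$-component of $\dot\mu_\omega$ vanishes because $\nabla\phi_s=s\nabla\omega$ enters the normalization factor only quadratically, which is the pointwise analogue of the paper's observation that $|\pmb{\mu}_\phi|^2_{g}\equiv1$ forces $\langle\dot{\pmb{\mu}}_\omega,\pmb{\mu}_\lambda\rangle_g=0$. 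What your route buys is transparency: the harmonicity of $u'$ follows at once from the $\phi$-independence of the right-hand side in the $(\sigma,\theta)$-picture, and the three-term product rule for $dU_{\phi_s}(\mu_{\phi_s})$ evaluated at the moving boundary point is conceptually very natural. What the paper's route buys is that all function-space differentiations occur over the single fixed domain $\Omega$, so the smooth dependence on $\phi$ is immediately available from Lemma~\ref{sec:peri-solut-serr-1} without having to extract and extend a shape derivative to $\overline{\Omega_\lambda}$; you correctly identify this as the one technical point and resolve it by defining $u'$ via the explicit expansion $u'(\sigma,\theta)=\dot u_\omega(\sigma,\theta/\lambda)-\tfrac{\theta}{\lambda}\omega(\sigma)\tilde u'(\theta)$, which inherits $C^{2,\alpha}$ regularity on $\overline{\Omega_\lambda}$. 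Both routes arrive at the same decomposition and the same answer.
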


The remainder of this section is devoted to the proof of Proposition~\ref{sec:peri-solut-serr-2}. To abbreviate the notation,  we put
$$
\tilde \nu_\phi(\omega):= [D_\phi \nu_\phi]\omega \in C^{1,\alpha}(\partial \O,\R^{N+1}) \qquad \text{for $\phi \in \cU,\: \omega \in C^{2,\alpha}(S^{N-1})$.}
$$
We start with the following simple observations.

\begin{Lemma}$ $\\
\label{sec:peri-solut-serr-3}
\begin{itemize}
\item[(i)] The map
$$
\cU \to C^{2,\alpha}(\overline \O, S^{N-1} \times \R) \subset C^{2,\alpha}(\overline \O, \R^{N+1}),\qquad \phi \mapsto \Psi_\phi
$$
is smooth. Moreover, for $\omega \in C^{2,\alpha}(S^{N-1})$,   we have
\begin{equation}
  \label{eq:diffPsi}
[D_\phi \Psi_{\phi}]\omega(\sigma,t) =  (0,\omega(\sigma)t) \qquad
\text{for $(\sigma,t) \in \overline \O$.}
\end{equation}
\item[(ii)]  Let
$$
m: \cU \to C^{2,\alpha}(\overline \O), \qquad \phi \mapsto m_\phi
$$
be a smooth map. Then the map
$$
\cM: \cU \to C^{1,\alpha}(\partial \O), \qquad \cM(\phi)=
\partial_{\nu_{\phi}} m_\phi
$$
is smooth as well and satisfies
$$
D_\phi \cM(\phi)\omega = \partial_{\tilde \nu_\phi (\omega)} m_\phi +  \partial_{\nu_{\phi}} \Bigl([D_{\phi} m_\phi]\omega\Bigr) \qquad \text{for $\omega \in C^{2,\alpha}(S^{N-1})$.}
$$
\end{itemize}
\end{Lemma}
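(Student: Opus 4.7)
For part (i), the plan is to observe that $\Psi_\phi(\sigma,t)=(\sigma,\phi(\sigma)t)$ is an \emph{affine} function of $\phi$: writing $\Psi_\phi = \Psi_0 + T\phi$ with $\Psi_0(\sigma,t)=(\sigma,0)$ and $(T\omega)(\sigma,t):=(0,\omega(\sigma)t)$, the linear operator $T:C^{2,\alpha}(S^{N-1}) \to C^{2,\alpha}(\overline\O,\R^{N+1})$ is bounded, since it is the composition of pull-back via the smooth projection $(\sigma,t)\mapsto\sigma$ with pointwise multiplication by the smooth bounded function $(\sigma,t)\mapsto t$. Consequently $\phi\mapsto\Psi_\phi$ is $C^\infty$ with $D\Psi_\phi = T$, which is exactly the identity (\ref{eq:diffPsi}). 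Since $\Psi_\phi$ pointwise takes values in $S^{N-1}\times\R$, its image clearly lies in $C^{2,\alpha}(\overline\O, S^{N-1}\times\R)$.

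For part (ii), the plan is to realize $\cM(\phi)$ as a continuous bilinear pairing of two smooth $\phi$-dependent objects, and then invoke the Leibniz rule. At each $p\in\partial\O$, the quantity $\cM(\phi)(p)=\partial_{\nu_\phi(p)}m_\phi(p)$ is the tangential directional derivative $dm_\phi(p)[\nu_\phi(p)]$, where $dm_\phi$ is the differential of $m_\phi$ viewed as a one-form on $S^{N-1}\times\R$ along $\overline\O$. Taking boundary traces, we obtain the factorization $\cM(\phi)=B\bigl(dm_\phi|_{\partial\O},\nu_\phi\bigr)$, where $B$ denotes the continuous bilinear pointwise pairing from $C^{1,\alpha}(\partial\O,T^*(S^{N-1}\times\R))\times C^{1,\alpha}(\partial\O,\R^{N+1})$ to $C^{1,\alpha}(\partial\O)$. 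The hypothesized smoothness of $\phi\mapsto m_\phi$ into $C^{2,\alpha}(\overline\O)$, combined with continuity of the differentiation and trace maps, yields smoothness of $\phi\mapsto dm_\phi|_{\partial\O}$. Together with the smoothness of $\phi\mapsto\nu_\phi$ established just before the statement, smoothness of $\cM$ follows by the chain rule.

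To obtain the derivative formula, I would then apply the Leibniz rule for $B$:
\begin{equation*}
D_\phi\cM(\phi)\omega = B\bigl(d([D_\phi m_\phi]\omega)|_{\partial\O},\,\nu_\phi\bigr) + B\bigl(dm_\phi|_{\partial\O},\,\tilde\nu_\phi(\omega)\bigr),
\end{equation*}
which translated back into directional-derivative notation is exactly
$$
D_\phi\cM(\phi)\omega = \partial_{\nu_\phi}\bigl([D_\phi m_\phi]\omega\bigr) + \partial_{\tilde\nu_\phi(\omega)} m_\phi,
$$
as claimed. Neither part is technically deep; the only nontrivial point is verifying that $B$ in fact lands in $C^{1,\alpha}$ and not merely in $C^{0,\alpha}$, which relies on the Banach-algebra property of $C^{1,\alpha}(\partial\O)$ under pointwise multiplication on the compact manifold $\partial\O$. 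Once this is in place, both assertions reduce to standard Fréchet calculus.
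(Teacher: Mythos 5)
Your argument is correct and coincides in substance with the paper's: part (i) is the immediate observation that $\Psi_\phi$ is affine in $\phi$, and part (ii) realizes $\cM(\phi)$ as the continuous bilinear pairing of $d m_\phi$ with $\nu_\phi$ and then differentiates by the Leibniz rule. The paper's proof is just a terser version of exactly this.
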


\proof
(i) follows immediately from the definition of $\Psi_\phi$.\\
(ii) For $\phi \in \cU$ and $(\sigma,t) \in \partial \Omega$, we have
$$
\cM(\phi)(\sigma,t) = \partial_{\nu_\phi} m_\phi (\sigma,t)= D
m_\phi(\sigma,t) \nu_{\phi}(\sigma,t).
$$
Hence $\cM$ is smooth as a bilinear form composed with smooth functions, and $D \cM$ is given by
$$
D_\phi \cM(\phi)\omega = D  [(D_\phi m_\phi)\omega]  \nu_{\phi} + D m_\phi   \tilde \nu_\phi (\omega) = \partial_{\nu_{\phi}} \Bigl([D_{\phi} m_\phi]\omega\Bigr)
+ \partial_{\tilde \nu_\phi (\omega)} m_\phi.
$$
\QED

Next, we let $\tilde u$ be fixed as in Lemma~\ref{eq: solappro} and such that $\tilde u(0)=1$ (this normalization does not matter in the following). Moreover, for $\phi \in \cU$, we define
\begin{equation}
  \label{eq:def-u-oben-phi}
u^\phi  \in  C^{2,\alpha}(\overline \O),\qquad u^\phi(\sigma,t)= \tilde u(\phi(\sigma)t).
\end{equation}
We then have
\begin{equation}
\label{eq:Proe1-1-0}
    -\Delta_{g_\phi} u^\phi= 1 \quad \textrm{in}  \quad \O
\end{equation}
since we can write $u^\phi = \bar u\, \circ \Psi_\phi$ with the function
\begin{equation}
  \label{eq:def-bar-u}
\bar u: S^{N-1}\times (-\frac{\pi}{2},\frac{\pi}{2}) \to \R,\; \bar u(\sigma,\theta)=  \tilde u(\theta)
\end{equation}
which solves
\begin{equation}
  \label{eq:approx-sol-eq}
-\Delta_{g_{(\sigma,\theta)}} \bar u = 1\qquad \text{in $S^{N-1} \times (-\frac{\pi}{2},\frac{\pi}{2})$.}
\end{equation}

We need the following fact.
\begin{Lemma}
 \label{sec:peri-solut-serr}
The map
$$
h: \cU  \to C^{1,\alpha} (S^{N-1}), \qquad h(\phi) := \partial_{\nu_\phi} u^\phi(\cdot,1) \in C^{1,\alpha} (S^{N-1})
$$
is smooth, and its derivative at a constant function $\phi \equiv  \lambda>0$ satisfies
\begin{equation}\label{eq:deriv-form}
  \bigl[D_\phi |_{\phi= \lambda}\, h\bigr]\omega= \tilde u''(\lambda)  \omega  \qquad \text{for $\sigma \in S^{N-1}$ and $\omega \in C^{2,\alpha}(S^{N-1})$.}
\end{equation}
 \end{Lemma}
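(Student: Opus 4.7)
The plan is to bring $h(\phi)$ into a fully explicit closed-form expression depending only on $\phi$ and $\nabla \phi$, so that both the smoothness of $h$ and the derivative formula (\ref{eq:deriv-form}) follow by direct inspection.

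The key ingredient is that $u^\phi = \bar u \circ \Psi_\phi$ with $\bar u(\sigma,\theta)=\tilde u(\theta)$, combined with the isometry identity (\ref{eq:rel-eta-phi-nu-phi}), namely $[d\Psi_\phi]\nu_\phi = \mu_\phi \circ \Psi_\phi$. By the chain rule applied on the boundary component $\{t=1\}$, where $\Psi_\phi(\sigma,1) = (\sigma,\phi(\sigma))$, I get
\[
\partial_{\nu_\phi} u^\phi(\sigma,1) \;=\; d\bar u(\sigma,\phi(\sigma))\bigl[ [d\Psi_\phi]\nu_\phi(\sigma,1)\bigr] \;=\; \partial_{\mu_\phi}\bar u(\sigma,\phi(\sigma)).
\]
Since $\bar u$ depends only on the $\theta$-variable, $d\bar u = \tilde u'(\theta)\, d\theta$, so only the $\theta$-component of $\mu_\phi$ contributes to this directional derivative. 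Reading off this component from (\ref{eq:eta-normal}), I arrive at the closed-form expression
\[
h(\phi)(\sigma) \;=\; \frac{\tilde u'(\phi(\sigma))}{\sqrt{1+|\nabla \phi(\sigma)|^2}}.
\]

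Smoothness of $h: \cU \to C^{1,\alpha}(S^{N-1})$ then becomes immediate: $\phi \mapsto \tilde u' \circ \phi$ is a smooth Nemytskii-type map on $C^{2,\alpha}(S^{N-1})$ since $\tilde u'$ is smooth on $(-\tfrac{\pi}{2},\tfrac{\pi}{2})$ and $C^{2,\alpha}(S^{N-1})$ is a Banach algebra closed under composition with smooth outer functions, while $\phi \mapsto (1+|\nabla \phi|^2)^{-1/2}$ is smooth into $C^{1,\alpha}(S^{N-1})$ by the same principle applied to the $C^{1,\alpha}$-valued gradient $\nabla \phi$.

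For the derivative at a constant $\phi\equiv\lambda$, observe that $\nabla\lambda = 0$, so the denominator equals $1$ and moreover its Fr\'echet derivative in any direction $\omega\in C^{2,\alpha}(S^{N-1})$ vanishes, because $|\nabla(\lambda+s\omega)|^2 = s^2|\nabla\omega|^2$ is purely quadratic in $s$. Only the numerator contributes, yielding
\[
\bigl[D_\phi|_{\phi=\lambda} h\bigr]\omega(\sigma) \;=\; \tilde u''(\lambda)\,\omega(\sigma),
\]
exactly as claimed. The one delicate step I anticipate is the careful bookkeeping of the isometry argument, in particular confirming that the pushforward relation (\ref{eq:rel-eta-phi-nu-phi}) is applied with the correct normalizations so as to reproduce the unit outer normal $\mu_\phi$ from (\ref{eq:eta-normal}); once the closed-form expression for $h$ is secured, the remaining differentiation is routine.
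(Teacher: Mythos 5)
Your proposal is correct, but it takes a genuinely different route from the paper's proof. You derive an explicit closed-form expression for $h(\phi)$ of the shape $\tilde u'(\phi)\big/\sqrt{1+|\nabla\phi|^2}$ and differentiate it by inspection, noting that at a constant $\phi\equiv\lambda$ the denominator is $1$ and has vanishing Fr\'echet derivative because the perturbation enters only quadratically through $\nabla(\lambda+s\omega)=s\nabla\omega$. The paper never writes down the closed form: it keeps $h(\phi)(\sigma)=\langle\pmb{\mu}_\phi(\sigma),(0,\tilde u'(\phi(\sigma)))\rangle_{g_{(\sigma,\theta)}}$, applies the product rule, and eliminates the term involving $[D_\phi\pmb{\mu}_\phi]\omega$ by differentiating the identity $|\pmb{\mu}_\phi|^2_{g_{(\sigma,\theta)}}\equiv 1$ and using that $(0,\tilde u'(\lambda))$ is proportional to $\pmb{\mu}_\lambda=(0,1)$. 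The paper's argument is more economical and more robust in that it never needs the exact formula for $\mu_\phi$ --- only that it is a unit field and equals $(0,1)$ when $\phi$ is constant. Your route buys transparency: the explicit formula turns both the smoothness of $h$ (via Nemytskii and Banach-algebra arguments) and the final differentiation into one-liners. The caveat you flag at the end is the right one: your closed form depends on reading the $\theta$-component off (\ref{eq:eta-normal}), and if that formula required a cosine-weight correction on $\nabla\phi$ (as one might worry after re-deriving the normal from the level-set function $\theta-\phi(\sigma)$ in the metric $\cos^2\theta\,g_{S^{N-1}}+d\theta^2$), the correction would still live entirely in the quadratic-in-$\nabla\phi$ denominator and vanish to first order at constant $\phi$, so (\ref{eq:deriv-form}) is insensitive to it. That insensitivity is precisely what the paper's unit-norm trick exploits abstractly and your closed-form computation makes concrete.
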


 \proof
For $\phi \in \cU$, recalling the definition of the outer normal $\mu_\phi$ on $\Omega_\phi$ given in (\ref{eq:eta-normal}), we define
$$
\pmb{\mu}_\phi \in  C^{1,\alpha} (S^{N-1}), \qquad {\pmb{\mu}}_\phi(\sigma)= \mu_\phi(\Psi_\phi (\sigma,1))=\mu_\phi(\sigma,\phi(\sigma)).
$$
By (\ref{eq:rel-eta-phi-nu-phi}) and since $\Psi_{\phi}:(\overline \O,g_\phi) \to
(\overline \O_{\phi},g_{(\sigma,\theta)})$ is an isometry, we have
\begin{align}\label{eq: cal h}
h(\phi)(\sigma)= \partial_{\nu_\phi} u^\phi(\sigma,1)&=\langle\nu_\phi(\sigma,1), \nabla_{g_{\phi}}u^\phi(\sigma,1)\rangle_{g_{\phi}}\nonumber\\
&=\langle {\pmb{\mu}}_\phi(\sigma), \nabla_{g_{(\sigma,\theta)}}
\bar{u}(\sigma,\phi(\sigma))\rangle_{g_{(\sigma,\theta)}}\quad\text{for
$\sigma\in S^{N-1}$.}
\end{align}
Recalling that $g_{(\sigma,\theta)}=\textrm{d}\theta^2+\cos^2 \theta \,
g_{S^{N-1}}(\sigma)$, we get
$$
\nabla_{g_{(\sigma,\theta)}} \bar u(\sigma,\theta)=(0,\tilde u'(\theta)) \in T_\sigma S^{N-1} \times \R \qquad \text{for $(\sigma,\theta) \in S^{N-1} \times (-\frac{\pi}{2},\frac{\pi}{2}),$}
$$
and therefore (\ref{eq: cal h}) yields
$$
 h(\phi)(\sigma)= \langle {\pmb{\mu}}_\phi(\sigma),   (0,\tilde u'(\phi(\sigma)) \rangle_{g_{(\sigma,\theta)}}
\qquad\text{for $\sigma\in S^{N-1}$.}
$$
Consequently, for $\omega \in C^{2,\alpha}(S^{N-1})$ we have
\begin{align}
 [D_\phi |_{\phi= \lambda} \,
h(\phi)]\omega &= \langle  [D_\phi |_{\phi= \lambda} \,
{\pmb{\mu}}_\phi ]\omega ,(0,\tilde u'(\lambda) \rangle_{g_{(\sigma,\theta)}} + \langle  {\pmb{\mu}}_\lambda  ,  (0,\tilde u''(\lambda)\omega \rangle_{g_{(\sigma,\theta)}} \nonumber\\
&= \tilde u'(\lambda) \langle  [D_\phi |_{\phi= \lambda} \,
{\pmb{\mu}}_\phi ]\omega , {\pmb{\mu}}_\lambda   \rangle_{g_{(\sigma,\theta)}} + \tilde u''(\lambda)\omega
 \qquad \text{on $S^{N-1}$.} \label{I-1-I-2}
\end{align}
 Here we used in the last step that
$$
{\pmb{\mu}}_\lambda(\sigma) = (0,1) \in T_{\sigma}S^{N-1} \times \R \qquad
\text{for all $\sigma \in S^{N-1}$}
$$
by Lemma~\ref{eq: phi=lambda-lemma}. Moreover, the first term in (\ref{I-1-I-2}) vanishes since the map
$$
\cU
\to C^{1,\alpha}(S^{N-1}), \qquad \phi \mapsto
|{\pmb{\mu}}_\phi|^2_{g_{(\sigma,\theta)}} \equiv 1
$$
is constant and thus
\begin{equation*}
0 = [D_\phi\, |{\pmb{\mu}}_\phi|^2_{g_{(\sigma,\theta)}}] \omega= 2 \langle [D_\phi
\, {\pmb{\mu}}_\phi]\omega
,{\pmb{\mu}}_\phi\rangle_{g_{(\sigma,\theta)}}\quad\textrm {on
$S^{N-1}$}
\end{equation*}
 for every $\phi \in \cU, \: \omega \in C^{2,\alpha}(S^{N-1})$. Thus (\ref{I-1-I-2}) yields
$$
  \bigl[D_\phi |_{\phi= \lambda}\, h\bigr]\omega = \tilde u''(\lambda)  \omega  \qquad \text{for $\omega \in C^{2,\alpha}(S^{N-1})$,}
$$
as claimed.
 \QED

We are now in a position to complete the\\

{\bf Proof of Proposition~\ref{sec:peri-solut-serr-2}.} For $\phi \in
\cU$, we note that the function
\begin{equation}\label{eq: afunc}
a_\phi:= u_\phi-u^\phi\in
C^{2,\alpha}(\overline \Omega)
\end{equation}
 satisfies
\begin{equation}
\label{eq:Proe1-2}
 \left \{
 \begin{aligned}
    -\Delta_{g_\phi} a_\phi&=0 &&\qquad \textrm{ in $\O$}\\
a_\phi&= -u^\phi \qquad &&\qquad \textrm{on $\partial \O$}.
  \end{aligned}
\right.
\end{equation}
Moreover,  in the case where $\phi \equiv \lambda>0$, we have
$$
u^\lambda(\sigma,t)=  \tilde u (\lambda t)\qquad \text{and}\qquad
u_\lambda(\sigma,t)= \tilde u (\lambda t)-\tilde u(\lambda)
$$
by (\ref{eq:form-u-lambda}), and hence
\begin{equation}\label{eq: wcons}
a_\lambda \equiv  -\tilde u(\lambda) \equiv const \qquad \text{on $\overline \Omega$.}
 \end{equation}
We now fix $\omega \in C^{2,\alpha}(\overline \Omega)$, and we consider the smooth map $T: \cU \to C^{0,\alpha}(\overline \Omega)$ given by
$$
 T(\phi) = \Delta_{g_\phi} a_\phi = g_{\phi}^{ij} \partial_{ij} a_{\phi} + \frac{1}{\sqrt{|g_\phi|}} \partial_i \Bigl(\sqrt{|g_\phi|}g_{\phi}^{ij}\Bigr)\partial_j a_\phi.
$$
By (\ref{eq:Proe1-2}) we have $T \equiv 0$ on $\cU$, and thus
\begin{equation}
  \label{eq:0-identity}
0= D T(\phi) \omega =  \Delta_{g_\phi} [D_\phi a_\phi]\omega   + h_{\phi,\omega}^{ij} \partial_{ij} a_{\phi} + \ell^j_\phi \partial_j a_\phi \qquad \text{for $\phi \in \cU$}
\end{equation}
with
$$
h_{\phi,\omega}^{ij}:= [D_\phi  g_{\phi}^{ij}]\omega \qquad \text{and}\qquad \ell^j_\phi := \bigl[D_\phi \frac{1}{\sqrt{|g_\phi|}} \partial_i \bigl(\sqrt{|g_\phi|}g_{\phi}^{ij}\bigr)\bigr]\omega.
$$
Evaluating (\ref{eq:0-identity}) at $\phi= \lambda$ and using \eqref{eq:
wcons}, we find that
\begin{equation}\label{eq: laplas}
 \Delta_{g_\lambda} \tau_{\omega,\lambda} =0 \qquad \text{in $C^{0,\alpha}(\overline \Omega)$}\qquad \text{for $\tau_{\omega,\lambda}:= [D_\phi \big|_{\phi=\lambda} a_\phi] \omega \in C^{2,\alpha}(\Omega)$.}
\end{equation}
Moreover,  the boundary condition in (\ref{eq:Proe1-2})  yields
$$
a_{\phi}(\sigma,1)=-u^{\phi}(\sigma, 1)=-\tilde{u}(\phi(\sigma))
$$
and by differentiation at $\phi \equiv \lambda$ we get
\begin{equation}\label{eq: bord}
\tau_{\omega,\lambda}(\sigma,1) = - \tilde u'(\lambda)
\omega(\sigma)\quad\textrm{for}\quad  \sigma \in S^{N-1}.
\end{equation}
By Lemma~\ref{sec:peri-solut-serr-3}(ii) and \eqref{eq: wcons} we
also have that
\begin{equation}
  \label{eq:proof-linearization-1}
[D_\phi \big|_{\phi=\lambda}   \partial_{\nu_{\phi}} a_\phi
(\cdot,1)]\omega  = \underbrace{\partial_{\tilde \nu_{\phi}(\omega)}
a_\lambda (\cdot,1)}_{=0} + \partial_{\nu_\lambda}
\tau_{\omega,\lambda} (\cdot,1) = \frac{1}{\lambda} \partial_t
\tau_{\omega,\lambda}(\cdot,1)\quad \textrm{on}\quad S^{N-1}.
\end{equation}
 Using Lemma~\ref{sec:peri-solut-serr} and (\ref{eq:proof-linearization-1}), we thus find that
 \begin{align*}
 [D_\phi \big|_{\phi=\lambda}    H(\phi)]\omega=  [D_\phi \big|_{\phi=\lambda}   \partial_{\nu_{\phi}} u_\phi (\cdot,1)]\omega&=  [D_\phi \big|_{\phi=\lambda}   \partial_{\nu_{\phi}} ( a_\phi+u^\phi )(\cdot,1)]\omega\\
&= \frac{1}{\lambda}
\partial_{t} \tau_{\omega,\lambda}(\cdot,1)+ \tilde u''(\lambda) \omega \qquad \text{on $S^{N-1}$.}
 \end{align*}
Since $\tilde u'(\lambda)<0$ by Lemma~\ref{eq: solappro}, we may consider the function $\psi_{\omega,\lambda}:= -\frac{\tau_{\omega,\lambda}}{\tilde
u'(\lambda)}  \in C^{2,\alpha}(\overline \Omega)$ which solves (\ref{eq:psi-omega}) as a consequence of \eqref{eq:
laplas} and \eqref{eq: bord}. We then have
$$
 [D_\phi \big|_{\phi=\lambda}    H(\phi)]\omega = -\frac{\tilde u'(\lambda)}{\lambda}
\partial_{t} \psi_{\omega,\lambda}(\cdot,1)+ \tilde u''(\lambda) \omega \qquad \text{on $S^{N-1}$,}
$$
as claimed in   \eqref{eq:diff-H-express}.
\QED

\begin{Remark}
\label{evenness}
We note that, for given $\omega \in C^{2,\alpha}(S^{N-1})$, the unique solution $\psi_{\omega,\lambda}$ of (\ref{eq:psi-omega}) is even in the $t$-variable. This merely follows from the unique solvability of (\ref{eq:psi-omega}) and the fact that the coefficients of the operator $\Delta_{g_\lambda}$ are even in $t$.
\end{Remark}

\section{Spectral properties of the linearization}
\label{sec:spectr-prop-line-1}

In this section, we provide a detailed study of the spectral properties of the linearized operator $\mbL_\lambda$ given
in Proposition~\ref{sec:peri-solut-serr-2}. For this we first recall that, if
$Y \in C^{\infty}(S^{N-1})$ is a
spherical harmonic of degree $j \in \N \cup \{0\}$, then we have
\begin{equation}
  \label{eq:def-gamma-j}
-\Delta_{S^{N-1}} Y = \gamma_{j} Y \qquad \text{with}\quad
\gamma_j:=  j(N-2+j).
\end{equation}
This property allows to determine the eigenvalues of $\mbL_\lambda$,
as we shall see in the following proposition.

\begin{Proposition}\label{proposition-eigenvalues}
Let $\lambda \in (0,\frac{\pi}{2})$, and let $Y \in C^{\infty}(S^{N-1})$ be a
spherical harmonic of degree $j \in \N \cup \{0\}$. Then $Y$ is an eigenfunction of $\mbL_{\lambda}$. More precisely,  we have
\begin{equation}
  \label{eq:eigenv-eq}
\mbL_\lambda Y = \sigma_{j}(\lambda)\, Y \qquad \text{on
$S^{N-1}$,}
\end{equation}
with
\begin{equation}\label{vapro}
\sigma_{j}(\lambda):= \tilde u'(\lambda) \Bigl((N-1) \tan \lambda - f_j(\lambda)\Bigr)-1,
\end{equation}
where  $\tilde u$ is the solution to \eqref{eq: lapalsolini} and $f_j:[0,\frac{\pi}{2}) \to \R$ is the unique solution of the initial value problem
\begin{equation}
\label{equation-f-j}
\left\{
  \begin{aligned}
f_j'(\l)&=-f_j^2(\l)+(N-1)\tan \l f_j(\l)+ \frac{\g_j}{\cos^2 \l}, \qquad \text{$\l\in (0,\frac{\pi}{2})$}\\
 f_j(0)&=0.
\end{aligned}
\right.
\end{equation}
Moreover, we have
\begin{equation}
\label{equation-f-j-explicit-1}
f_0(\l)=0,\quad f_1(\l)= (N-1) \tan \l \qquad \text{for $\lambda \in [0,\frac{\pi}{2})$}
\end{equation}
and
\begin{equation}
\label{equation-f-j-explicit}
c_j \tan \l \le f_j(\l) \le N  \frac{\sqrt{\gamma_j}}{ \cos \lambda}  \qquad \text{for $\lambda \in
[0,\frac{\pi}{2})$ if $j \ge 2$,}
\end{equation}
where $c_j:= \frac{N-2+\sqrt{(N-2)^2+4\gamma_j}}{2} >N-1$.
\end{Proposition}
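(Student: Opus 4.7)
The plan is to use separation of variables in the Dirichlet problem \eqref{eq:psi-omega}, then a change of variables that eliminates $\lambda$ from the resulting radial ODE, and finally to read off the Riccati equation \eqref{equation-f-j} from the logarithmic derivative of the even solution. Starting from Proposition \ref{sec:peri-solut-serr-2}, since $Y$ is a spherical harmonic of degree $j$ with $-\Delta_{S^{N-1}} Y = \gamma_j Y$ and the operator in \eqref{eq:laplace-lambda} splits into a $t$-block and a $\sigma$-block, I would seek the unique solution of \eqref{eq:psi-omega} with boundary data $Y$ in the product form $\psi_{Y,\lambda}(\sigma,t) = v(t) Y(\sigma)$. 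A direct substitution shows that $v$ is even (by Remark \ref{evenness}) and solves $\bigl(\cos^{N-1}(\lambda t)\, v'\bigr)' = \lambda^2 \gamma_j \cos^{N-3}(\lambda t)\, v$ with $v(\pm 1) = 1$.

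The substitution $\theta = \lambda t$ eliminates $\lambda$ from this equation: letting $V_j$ denote the (unique up to scaling) smooth even solution on $(-\tfrac{\pi}{2},\tfrac{\pi}{2})$ of $(\cos^{N-1}\theta\, V'_j)' = \gamma_j \cos^{N-3}\theta\, V_j$, one has $v(t) = V_j(\lambda t)/V_j(\lambda)$, and hence $v'(1) = \lambda f_j(\lambda)$, where $f_j(\lambda) := V_j'(\lambda)/V_j(\lambda)$ is the scale-invariant logarithmic derivative. Plugging $\partial_t \psi_{Y,\lambda}(\sigma,1) = \lambda f_j(\lambda) Y(\sigma)$ into \eqref{eq:diff-H-express} and eliminating $\tilde u''(\lambda)$ via the expanded form $\tilde u''(\lambda) = (N-1)\tan\lambda\,\tilde u'(\lambda) - 1$ of \eqref{eq: lapalsolini} produces \eqref{vapro}.

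The Riccati ODE \eqref{equation-f-j} then arises by differentiating the identity $V_j' = f_j V_j$ to get $f_j' = V_j''/V_j - f_j^2$ and substituting the expanded form $V_j'' = (N-1)\tan\theta\, V_j' + \gamma_j V_j/\cos^2\theta$ of the $V_j$-equation. The initial condition $f_j(0) = 0$ is forced by $V_j'(0) = 0$, which in turn follows from the evenness of $V_j$. The formulas \eqref{equation-f-j-explicit-1} follow by inspection: $\gamma_0 = 0$ forces $V_0 \equiv \mathrm{const}$ and hence $f_0 \equiv 0$, while for $j = 1$ the ansatz $V_1(\theta) := \cos^{-(N-1)}\theta$ verifies the $V_j$-equation, giving $f_1(\lambda) = (N-1)\tan\lambda$.

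For $j \ge 2$, the two-sided bound \eqref{equation-f-j-explicit} follows from a Riccati comparison with explicit barriers. Writing $\phi(\lambda,y) := -y^2 + (N-1)\tan\lambda\, y + \gamma_j/\cos^2\lambda$, the candidate subsolution $g(\lambda) := c_j \tan\lambda$ satisfies $\phi(\lambda,g(\lambda)) - g'(\lambda) = \gamma_j - c_j$ (the $\tan^2\lambda$-contribution cancels by the defining identity $c_j^2 - (N-2)c_j = \gamma_j$), which is positive precisely when $\gamma_j > c_j$; this is equivalent to $c_j > N-1$, hence to $\gamma_j > N-1$, which holds for $j \ge 2$ since $\gamma_j \ge 2N$. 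The candidate supersolution $h(\lambda) := N\sqrt{\gamma_j}/\cos\lambda$ satisfies $h'(\lambda) - \phi(\lambda,h(\lambda)) = [(N^2-1)\gamma_j - N(N-2)\sqrt{\gamma_j}\sin\lambda]/\cos^2\lambda$, which is positive since $\sqrt{\gamma_j}\,(N-1)(N+1) > N(N-2)$ for $j \ge 2$, $N \ge 2$. Since $f_j(0) = g(0) = 0 < h(0)$ and $f_j'(0) = \gamma_j > c_j = g'(0)$, a standard first-crossing argument yields $g \le f_j \le h$ on $[0,\tfrac{\pi}{2})$. I expect the main delicate point to be the comparison at $\lambda = 0$ where $f_j$ and $g$ share a root; this is overcome by comparing right-derivatives at the origin rather than values.
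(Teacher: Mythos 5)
Your argument follows essentially the same path as the paper: separation of variables reducing to a radial ODE, a change of variable removing $\lambda$, identification of $f_j$ as a logarithmic derivative satisfying the Riccati IVP \eqref{equation-f-j}, and Riccati comparison for the two-sided bound \eqref{equation-f-j-explicit}; the paper uses $z=\sin(\lambda t)$ in place of your $\theta=\lambda t$ (the resulting profiles are related by $V_j(\theta)=B_j(\sin\theta)$, so your $f_j=V_j'/V_j$ coincides with the paper's $\cos\lambda\,B_j'(\sin\lambda)/B_j(\sin\lambda)$) and packages the comparison arguments slightly differently (Gronwall for the lower bound, a minimum-of-set contradiction for the upper), but these are cosmetic repackagings of the same ideas. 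The one step you should not omit is verifying that $V_j$ does not vanish on $[0,\tfrac{\pi}{2})$, which is needed both to define $f_j=V_j'/V_j$ and to justify the formula $v(t)=V_j(\lambda t)/V_j(\lambda)$; the paper handles the analogous statement $B_j>0$ by the elementary observation that, by the ODE, any zero of $B_j'$ at a point where $B_j>0$ (resp.\ $B_j<0$) would be a strict local minimum (resp.\ maximum), which together with $B_j(0)=1$, $B_j'(0)=0$ forces $B_j>0$ and $B_j'>0$ on $(0,1)$.
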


\proof In view of Remark~\ref{evenness} and (\ref{eq:def-gamma-j}), it is natural to make a separation of variables ansatz for the unique solution
$\psi_{Y,\lambda}$ of (\ref{eq:psi-omega}) in the special case $\omega = Y$, writing
$\psi_{Y, \l}(\sigma,t)= b_{j,\lambda}(|t|) Y(\sigma)$, for some function  $b_{j,\lambda}:[0,1) \to \R$ with $b_{j,\lambda}'(0)=0$.
By direct computation using (\ref{eq:laplace-lambda}) and (\ref{eq:def-gamma-j}), the problem (\ref{eq:psi-omega}) then reduced to the following boundary value problem for the function $b_{j,\lambda}$:
\begin{equation}\label{eq:problem4}
\left\{
\begin{aligned}
&b_{j,\lambda}''(t) - (N-1) \lambda
\tan(\lambda t) b_{j,\lambda}'(t) - \dfrac{\lambda^2 \gamma_j}{\cos^2(\lambda
t)}b_{j,\lambda}(t) =
 0, \qquad t \in [0,1],\\
&b_{j,\lambda}'(0)=0,\: b_{j,\lambda}(1)=1,
\end{aligned}
\right.
\end{equation}
To see that \eqref{eq:problem4} is uniquely solvable, we make the change of variable $z= \sin(\lambda t)$ which also extracts the $\lambda$-dependence of the problem. We thus write
\begin{equation}
  \label{eq:def-b-B}
b_{j,\lambda}(t)=\frac{B_j(\sin(\l t))}{B_j(\sin \l)},
\end{equation}
where the function $B_j$ is the unique solution of the linear initial value problem
\begin{equation}\label{eq: Bj-initial}
\left\{
  \begin{aligned}
&B''_j -\dfrac{N z}{1-z^2} B'_j-\dfrac{
\gamma_j}{(1-z^2)^2}B_j=0, &&\qquad \text{$z \in (-1,1)$}\\
&B_j(0)=1, \qquad B_j'(0)=0,
  \end{aligned}
\right.
\end{equation}
which is well defined on $[0,1)$. We note that the differential equation in \eqref{eq: Bj-initial}  is a particular case of Heun's equation, see e.g. \cite[Page 57]{Erdil-V3}. It is important to note that
\begin{equation}
  \label{eq:Bj-pos-incr}
\text{$B_j$ is positive and satisfies $B_j'>0$ on $(0,1)$.}
\end{equation}
Indeed, by the differential equation in (\ref{eq: Bj-initial}), any point
$z \in (-1,1)$ with $B_j'(z)=0$ is a strict local minimum if
$B_j(z)>0$ and a strict local maximum if $B_j(z)<0$. Combining this property with
the initial conditions in (\ref{eq: Bj-initial}), we obtain (\ref{eq:Bj-pos-incr}).

In particular, since $\lambda \in (0,\frac{\pi}{2})$, it follows from (\ref{eq:Bj-pos-incr}) that $B_j(\sin \l)>0$. Therefore,
the function $b_{j,\lambda}$ is well defined on $[0,1]$ by (\ref{eq:def-b-B}), and a direct computation shows that it solves (\ref{eq:problem4}).

By definition of the operator $\mbL_\lambda$ and the special form of the function $\psi_{Y,\lambda}$, we then conclude that (\ref{eq:eigenv-eq}) holds with
$$
\sigma_j(\lambda)= -\frac{\tilde
u'(\lambda)}{\lambda}b_{j,\lambda}'(1)+\tilde u''(\lambda) = \tilde
u'(\lambda) \Bigl((N-1) \tan \lambda - \frac{b_{j,\lambda}'(1)}{\lambda}\Bigr)-1,
$$
where we used in the last step that
\begin{equation}
\label{eq: seconderini}
\tilde{u}''(\l)=-1+(N-1)\tan \l \tilde{u}'(\l) \qquad \text{for $\lambda \in [0,\frac{\pi}{2})$}
\end{equation}
by (\ref{eq: lapalsolini}). Next we prove that (\ref{equation-f-j}) admits a unique solution $f_j$ on $[0,\frac{\pi}{2})$, and that
\be \label{eq:def-f-lambda}
f_j(\l)=\frac{b_{j,\lambda}'(1)}{\l} \qquad \text{for $\lambda \in (0,\frac{\pi}{2})$.}
\ee
The uniqueness is obvious, since (\ref{equation-f-j}) is an initial value problem for a first order ODE with smooth coefficients. We now observe that (\ref{equation-f-j}) is solved by the function
\be
\label{eq:f-cos-Bj-pr}
f_j:[0,
\frac{\pi}{2}) \to \R, \qquad f_j(\l):=\cos \l \frac{B_j'(\sin \l
)}{B_j(\sin \l)}. \ee
Indeed, differentiation yields
\begin{align}
f_j'(\lambda) &= -\sin \l\: \frac{B_j'(\sin \l )}{B_j(\sin \l)} + \cos^2 \l\: \frac{B_j''(\sin \l)B_j(\sin \l)-[B_j'(\sin \l)]^2}{B_j^2(\sin \l)} \nonumber\\
&= -\tan \l\: f_j(\l)  + \cos^2 \l\: \frac{B_j''(\sin \l)}{B_j(\sin
\l)} -f_j^2(\l),  \label{f_j-eq-calc}
\end{align}
and
$$
\cos^2 \l\: B_j''(\sin \l)=N {\sin \l}{} \,B_j'(\sin \l
)+\frac{\g_j B_j(\sin \l)}{\cos^2 \l}
$$
by (\ref{eq: Bj-initial}). Inserting this in (\ref{f_j-eq-calc}) gives (\ref{equation-f-j}), as claimed. Moreover, by (\ref{eq:def-b-B}) we have
\begin{equation}\label{eq:calvj-0}
b'_{j,\lambda}(1)=\frac{\l \cos \l \: B'_j(\sin \l)}{B_j(\sin \l)}=
\l f_j(\l) \qquad \text{for $\lambda \in (0,\frac{\pi}{2})$,}
\end{equation}
as claimed in \eqref{eq:def-f-lambda}.

Next we note that (\ref{equation-f-j-explicit-1}) is an obvious consequence of (\ref{equation-f-j}). To complete the proof, it thus remains to prove (\ref{equation-f-j-explicit}). So fix $j \ge 2$. Then $\gamma_j >N-1$, and thus the unique positive root $c=c_j$ of the equation
$c^2- (N-2)c-\gamma_j=0$ satisfies $c_j= \frac{N-2 + \sqrt{(N-2)^2+4\gamma_j}}{2} >N-1$. Moreover, the function
$$
\lambda \mapsto g_j(\lambda)= c_j \tan \l
$$
satisfies $g(0)=0$ and
\begin{align*}
g_j'(\lambda)+ g_j^2(\lambda)- (N-1) \tan \l\: g_j(\l)- \frac{\gamma_j}{\cos^2 \l}&= \frac{c_j+ \sin^2 \l\:[c_j^2-(N-1)c_j]- \gamma_j}{\cos^2 \l}\\
&\le \frac{c_j^2-(N-2)c_j- \gamma_j}{\cos^2 \l}=0
\end{align*}
for $\lambda \in (0,\frac{\pi}{2})$. Hence the function $h_j= f_j-g_j$ satisfies $h_j(0)=0$ and
$$
h_j'(\lambda) \ge g_j^2(\lambda) - f_j^2(\lambda) + (N-1) \tan \l\: h_j(\l) = \Bigl((N-1) \tan \l - f_j(\lambda)-g_j(\lambda)\Bigr) h_j(\lambda).
$$
By Gronwall's inequality, this implies that $h_j \ge 0$ on
$[0,\frac{\pi}{2})$,  and thus
$$
f_j(\lambda) \ge g_j(\lambda) = c_j \tan \l \qquad \text{for $\lambda \in [0,\frac{\pi}{2})$,}
$$
which gives the lower estimate in (\ref{equation-f-j-explicit}).

To see the upper bound in (\ref{equation-f-j-explicit}), we argue by contradiction. So we consider the function
$\lambda \mapsto F_j(\lambda):= f_j(\l) - N  \frac{\sqrt{\gamma_j}}{ \cos \lambda}$ and we assume that the set
$$
M:= \{\lambda \in (0,\frac{\pi}{2})\::\: F_j(\lambda) \ge 0\}
$$
is nonempty. Since $F_j(0)=-N \sqrt{\gamma_j}<0$ and $F_j$ is continuous, it then follows that there exists
$\lambda^*:= \min M \in (0,\frac{\pi}{2})$. We then have $F_j(\lambda^*)=0$, i.e., $f_j(\lambda^*)= N  \frac{\sqrt{\gamma_j}}{ \cos \lambda^*}$ and
\begin{align*}
0 &\le F_j'(\lambda^*)= -f_j^2(\lambda^*) +(N-1)\tan \l^* \: f_j(\lambda^*)+ \frac{\g_j}{\cos^2 \l^*} -
N \sqrt{\gamma_j} \frac{\sin \l^*}{\cos^2 \l^*}\\
& \le -N^2  \frac{\gamma_j}{ \cos^2 \l^*} +  N(N-1) \tan \l^* \frac{\sqrt{\gamma_j}}{\cos \l^*} + \frac{\g_j}{\cos^2 \l^*}\\
&\le \frac{1}{\cos^2 \l^*} \Bigl((1-N^2)\gamma_j  + N(N-1) \sqrt{\gamma_j}\Bigr) \le \frac{N^2-1}{\cos^2 \l^*} \bigl(\sqrt{\gamma_j}- \gamma_j\bigr)<0.
\end{align*}
This is a contradiction, and thus the upper bound in (\ref{equation-f-j-explicit}) follows. The proof is finished.
 \QED

Proposition~\ref{proposition-eigenvalues} allows us to easily derive important properties of the shape and asymptotics of the eigenvalue curves $\lambda \mapsto \sigma_j(\lambda)$.
The following lemmas will be crucial
in order to apply the Crandall-Rabinowitz theorem.

\begin{Lemma}
\label{j-asymptotics}
For every $\lambda \in (0,\frac{\pi}{2})$ we have
$$
0 < \liminf_{j \to \infty}\frac{\sigma_j(\lambda)}{j} \le \limsup_{j \to \infty}\frac{\sigma_j(\lambda)}{j}<\infty.
$$
\end{Lemma}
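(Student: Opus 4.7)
The plan is to read the asymptotics directly off the formula \eqref{vapro} for $\sigma_j(\lambda)$ together with the two-sided bound \eqref{equation-f-j-explicit} on $f_j(\lambda)$. Dividing \eqref{vapro} by $j$ gives
\begin{equation*}
\frac{\sigma_j(\lambda)}{j}
= \tilde u'(\lambda)\Bigl((N-1)\tan\lambda - f_j(\lambda)\Bigr)\frac{1}{j} - \frac{1}{j}
= -\tilde u'(\lambda)\,\frac{f_j(\lambda)}{j} + O\!\left(\frac{1}{j}\right),
\end{equation*}
since $\lambda$ is fixed and $\tilde u'(\lambda)(N-1)\tan\lambda$ is a constant. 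Thus everything reduces to controlling $f_j(\lambda)/j$ from above and below by positive constants.

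For that I would invoke \eqref{equation-f-j-explicit}, which for $j\ge 2$ gives
\begin{equation*}
\frac{c_j}{j}\tan\lambda \;\le\; \frac{f_j(\lambda)}{j} \;\le\; \frac{N\sqrt{\gamma_j}}{j\cos\lambda}.
\end{equation*}
Recalling $\gamma_j = j(N-2+j)$, one has $\sqrt{\gamma_j}/j = \sqrt{1 + (N-2)/j} \to 1$, and consequently $c_j/j = \tfrac12\bigl((N-2)/j + \sqrt{(N-2)^2/j^2 + 4\gamma_j/j^2}\bigr) \to 1$ as $j\to\infty$. Hence
\begin{equation*}
\liminf_{j\to\infty}\frac{f_j(\lambda)}{j}\;\ge\;\tan\lambda,\qquad
\limsup_{j\to\infty}\frac{f_j(\lambda)}{j}\;\le\;\frac{N}{\cos\lambda}.
\end{equation*}

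Finally I combine this with the sign of $\tilde u'(\lambda)$. By Lemma~\ref{eq: solappro} we have $-\tilde u'(\lambda)\ge \lambda>0$ for $\lambda\in(0,\tfrac\pi2)$, so $-\tilde u'(\lambda)$ is a strictly positive constant. Plugging the bounds on $f_j(\lambda)/j$ into the expression above yields
\begin{equation*}
0 < -\tilde u'(\lambda)\tan\lambda \;\le\; \liminf_{j\to\infty}\frac{\sigma_j(\lambda)}{j}
\;\le\; \limsup_{j\to\infty}\frac{\sigma_j(\lambda)}{j}\;\le\;-\tilde u'(\lambda)\,\frac{N}{\cos\lambda}<\infty,
\end{equation*}
which is the desired conclusion. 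There is no real obstacle here: all the analytic work (the bounds on $f_j$ and on $\tilde u'$) has already been carried out in Proposition~\ref{proposition-eigenvalues} and Lemma~\ref{eq: solappro}, so the argument is essentially a bookkeeping of the leading-order terms in $j$.
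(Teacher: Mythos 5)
Your proof is correct and uses exactly the same ingredients as the paper, namely the formula \eqref{vapro} for $\sigma_j(\lambda)$, the two-sided bound \eqref{equation-f-j-explicit} on $f_j(\lambda)$, and the sign information $\tilde u'(\lambda)<0$ from Lemma~\ref{eq: solappro}. The paper's own proof is a one-line reference to these same facts; you have simply spelled out the bookkeeping that shows $c_j/j \to 1$ and $\sqrt{\gamma_j}/j \to 1$.
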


\begin{proof}
Since $\tilde u'(\lambda)<0$ for $\lambda \in (0,\frac{\pi}{2})$ by (\ref{ine der}), the claim follows from (\ref{vapro}) and (\ref{equation-f-j-explicit}).
\end{proof}

\begin{Lemma}\label{eq:Uniquezero}
For every $\lambda \in (0,\frac{\pi}{2})$ and $i,j \in \N \cup \{0\}$, $i < j$ we have
$\sigma_j(\lambda)>\sigma_i(\lambda)$.
\end{Lemma}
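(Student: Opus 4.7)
The plan is to reduce the inequality for the eigenvalues $\sigma_j(\lambda)$ to a monotonicity statement for the auxiliary functions $f_j$ appearing in Proposition~\ref{proposition-eigenvalues}. From formula~(\ref{vapro}) we have
$$
\sigma_j(\lambda) - \sigma_i(\lambda) = \tilde u'(\lambda)\bigl(f_i(\lambda) - f_j(\lambda)\bigr),
$$
and Lemma~\ref{eq: solappro} gives $\tilde u'(\lambda) \le -\lambda < 0$ on $(0,\frac{\pi}{2})$. It therefore suffices to prove that $f_j(\lambda) > f_i(\lambda)$ for all $\lambda \in (0,\frac{\pi}{2})$ whenever $i < j$.

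First I would record that $j \mapsto \gamma_j = j(N-2+j)$ is strictly increasing on $\N \cup \{0\}$, which is immediate. The key step is then a first-order ODE comparison on the initial value problem~(\ref{equation-f-j}). Setting $g := f_j - f_i$ and subtracting the ODEs yields
$$
g'(\lambda) = \bigl((N-1)\tan\lambda - f_j(\lambda) - f_i(\lambda)\bigr)\, g(\lambda) + \frac{\gamma_j - \gamma_i}{\cos^2 \lambda}, \qquad g(0)=0.
$$
The inhomogeneous term is strictly positive on $(0,\frac{\pi}{2})$ since $\gamma_j > \gamma_i$, and the coefficient $a(\lambda) := (N-1)\tan\lambda - f_j(\lambda) - f_i(\lambda)$ is continuous on $[0,\frac{\pi}{2})$ because $f_i,f_j$ are (as solutions of the IVP~(\ref{equation-f-j}) with smooth coefficients).

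Multiplying by the integrating factor $e^{-\int_0^\lambda a(s)\,ds}$, I would then write
$$
\frac{d}{d\lambda}\Bigl( g(\lambda)\, e^{-\int_0^\lambda a(s)\,ds} \Bigr) = \frac{\gamma_j - \gamma_i}{\cos^2 \lambda}\, e^{-\int_0^\lambda a(s)\,ds} > 0,
$$
so that integration from $0$ to $\lambda$, together with the initial condition $g(0)=0$, gives $g(\lambda) > 0$ on $(0,\frac{\pi}{2})$. Combining this with the opening reduction finishes the proof of Lemma~\ref{eq:Uniquezero}.

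No real obstacle is expected: once Proposition~\ref{proposition-eigenvalues} and Lemma~\ref{eq: solappro} are in hand, the argument is a one-line ODE comparison via Gronwall's inequality applied to the difference $f_j - f_i$. The only subtlety is keeping track of the sign of $\tilde u'(\lambda)$, which flips the monotonicity from $f_j$ to $\sigma_j$, but this is directly given by Lemma~\ref{eq: solappro}.
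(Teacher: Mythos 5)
Your proposal is correct and follows essentially the same route as the paper: reduce via formula~(\ref{vapro}) and the sign of $\tilde u'$ to the inequality $f_j > f_i$, then establish the latter by subtracting the two ODEs in~(\ref{equation-f-j}) and observing that the resulting inhomogeneous term $(\gamma_j-\gamma_i)/\cos^2\lambda$ is strictly positive. The only cosmetic difference is that you write out the integrating-factor step explicitly where the paper simply invokes the conclusion.
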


\proof This follows readily from (\ref{vapro}) and the fact that
\begin{equation}
  \label{eq:order-f-j}
f_j(\lambda)>f_i(\lambda) \qquad \text{for $i,j \in \N \cup \{0\}$, $i < j$.}
\end{equation}

To see (\ref{eq:order-f-j}), note that $h:= f_j-f_i$ solves the initial value problem
\begin{equation}
\label{equation-h-proof}
\left\{
  \begin{aligned}
h'(\lambda) &= a(\lambda)  h(\lambda)+ \frac{\gamma_j-\gamma_i}{\cos^2 \l},\qquad \text{$\lambda \in (0,\frac{\pi}{2})$,}\\
 h(0)&=0,
\end{aligned}
\right.
\end{equation}
with $a(\lambda)=(N-1) \tan \l - f_j(\lambda)-f_i(\lambda)$. Since
$\gamma_j-\gamma_i>0$, this implies that $h(\lambda)>0$ for $\lambda
\in (0,\frac{\pi}{2})$, and thus (\ref{eq:order-f-j}) holds.
 \QED

\begin{Lemma}\label{eq: eges}
The eigenvalue curves $\lambda \mapsto \sigma_j(\lambda)$ have the following asymptotics.
\begin{enumerate}
\item $\lim \limits_{\l \to 0} \sigma_j(\l)=-1$ for every $j \ge 0$.
\item $\lim \limits_{\l \to \frac{\pi}{2}^-} \sigma_j(\l)=\infty $ for every $j \ge 2$.
\end{enumerate}
\end{Lemma}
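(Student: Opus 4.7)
Both assertions follow directly from the formula (\ref{vapro}),
$$\sigma_j(\lambda) = \tilde u'(\lambda)\bigl((N-1)\tan\lambda - f_j(\lambda)\bigr) - 1,$$
together with the information on $\tilde u'$ and $f_j$ already established in Proposition~\ref{proposition-eigenvalues} and Lemma~\ref{eq: solappro}. I do not anticipate a serious obstacle; the lemma is essentially a corollary.

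For (1) the strategy is a pure continuity argument at $\lambda=0$. I would note that $\tilde u'(0)=0$ (since $\tilde u$ is the even solution of (\ref{eq: lapalsolini})), that $f_j$ extends continuously to $0$ with $f_j(0)=0$ by the initial condition in (\ref{equation-f-j}), and that $\tan 0 = 0$. Consequently each of the two factors in the product $\tilde u'(\lambda)\bigl((N-1)\tan\lambda - f_j(\lambda)\bigr)$ is continuous on $[0,\frac{\pi}{2})$ and vanishes at $\lambda=0$, so the product tends to $0$ and $\sigma_j(\lambda)\to -1$.

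For (2) I would exploit the lower bound $f_j(\lambda)\ge c_j\tan\lambda$ with $c_j>N-1$ from (\ref{equation-f-j-explicit}), which is available precisely because $j\ge 2$. Using that $-\tilde u'(\lambda)\ge \lambda$ by (\ref{ine der}) and rewriting
$$\tilde u'(\lambda)\bigl((N-1)\tan\lambda - f_j(\lambda)\bigr) = \bigl(-\tilde u'(\lambda)\bigr)\bigl(f_j(\lambda) - (N-1)\tan\lambda\bigr) \ge \lambda\bigl(c_j-(N-1)\bigr)\tan\lambda,$$
the right-hand side blows up as $\lambda\to \tfrac{\pi}{2}^-$ because $c_j-(N-1)>0$ and $\tan\lambda\to+\infty$. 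Therefore $\sigma_j(\lambda)\to +\infty$.

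The mildly delicate point worth flagging is the restriction $j\ge 2$ in (2): the strict inequality $c_j>N-1$ is exactly what drives the blow-up, and it fails for $j\le 1$. In particular $f_1(\lambda)=(N-1)\tan\lambda$ by (\ref{equation-f-j-explicit-1}), which makes the bracket in (\ref{vapro}) vanish identically and gives $\sigma_1\equiv -1$, consistent with (2) being stated only for $j\ge 2$.
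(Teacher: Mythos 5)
Your proof is correct and follows essentially the same route as the paper: part (1) by continuity of $\tilde u'$, $\tan$, and $f_j$ at $\lambda=0$ via (\ref{vapro}) and (\ref{equation-f-j}), and part (2) by combining $-\tilde u'(\lambda)\ge\lambda$ from (\ref{ine der}) with the lower bound $f_j(\lambda)\ge c_j\tan\lambda$ from (\ref{equation-f-j-explicit}). One small remark: the paper's displayed inequality reads $\sigma_j(\lambda)\ge (c_j-N-1)\lambda\tan\lambda$, which is a slip (for $N=2$, $j=2$ one has $c_2=2$, so $c_2-N-1<0$); your bound $\sigma_j(\lambda)\ge (c_j-(N-1))\lambda\tan\lambda-1$ with the constant $c_j-N+1>0$ is the correct one and is exactly what $c_j>N-1$ delivers.
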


\proof
 Claim $(i)$ follows immediately from (\ref{vapro}) and (\ref{equation-f-j}).\\
Moreover, for $j \ge 2$, (\ref{ine der}), (\ref{vapro}) and (\ref{equation-f-j-explicit}) yield that
$$
\sigma_j(\l) \ge  (c_j-N-1) \l \tan \l \to \infty \qquad \text{as
$\l \to \frac{\pi}{2}^-$,}
$$
as claimed.
\QED

\begin{Proposition}\label{eq:OK-egesff}
For every $j\geq 2$,  there exists a unique $\l_j\in (0, \frac{\pi}{2})$ such that $\sigma_j(\lambda_j)=0$. Moreover, we have
\begin{enumerate}
\item $\sigma_j'(\lambda_j) >  0$ for $j \ge 2$;
\item $\lambda_j<\lambda_i$ for $2 \le i < j$;
\item $\lim \limits_{j \to \infty} \lambda_j=0$.
\end{enumerate}
 \end{Proposition}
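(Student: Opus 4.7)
The plan is to reduce everything to a single explicit computation of $\sigma_j'(\l)$ evaluated at a zero of $\sigma_j$. Existence of $\l_j\in(0,\tfrac{\pi}{2})$ with $\sigma_j(\l_j)=0$ is immediate from the intermediate value theorem once we invoke Lemma~\ref{eq: eges}, which provides $\sigma_j(0^+)=-1<0$ and $\sigma_j(\tfrac{\pi}{2}^-)=+\infty$. The main point is therefore to show that at \emph{every} zero the derivative is strictly positive: this simultaneously yields claim (i) and the uniqueness statement, since a continuous function on an interval cannot have two zeros if it crosses $0$ only transversally from below.

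For the key computation, I would differentiate (\ref{vapro}), obtaining
$$
\sigma_j'(\l)=\tilde u''(\l)\bigl((N-1)\tan\l - f_j(\l)\bigr) + \tilde u'(\l)\left(\frac{N-1}{\cos^2\l} - f_j'(\l)\right),
$$
then substitute the ODE (\ref{equation-f-j}) for $f_j'$ and the identity (\ref{eq: seconderini}) for $\tilde u''$. At a point $\l_j$ where $\sigma_j(\l_j)=0$ one has the relation $\tilde u'(\l_j)\bigl((N-1)\tan\l_j - f_j(\l_j)\bigr)=1$, which allows one to rewrite $f_j(\l_j)-(N-1)\tan\l_j=-1/\tilde u'(\l_j)$ and thereby eliminate all quadratic-in-$f_j$ terms. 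After the cancellations the formula collapses to
$$
\sigma_j'(\l_j)=\frac{\tilde u'(\l_j)\,(N-1-\g_j)}{\cos^2\l_j}.
$$
Since $\g_j=j(N-2+j)\geq 2N>N-1$ for $j\geq 2$ and $\tilde u'(\l_j)<0$ by Lemma~\ref{eq: solappro}, the right-hand side is strictly positive. This gives both (i) and uniqueness.

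For (ii), Lemma~\ref{eq:Uniquezero} gives $\sigma_j(\l_i)>\sigma_i(\l_i)=0$ whenever $2\leq i<j$, while $\sigma_j(0^+)=-1<0$. By continuity $\sigma_j$ has a zero in $(0,\l_i)$, and by the uniqueness just established this zero must be $\l_j$, so $\l_j<\l_i$.

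For (iii), the sequence $(\l_j)_{j\geq 2}$ is strictly decreasing by (ii) and bounded below by $0$, hence converges to some $\l^*\in[0,\tfrac{\pi}{2})$. If $\l^*>0$, the equation $\sigma_j(\l_j)=0$ rewrites as
$$
f_j(\l_j)=(N-1)\tan\l_j-\frac{1}{\tilde u'(\l_j)},
$$
whose right-hand side remains bounded as $j\to\infty$ because $\tilde u'(\l^*)<0$ and $\l_j\to\l^*\in(0,\tfrac{\pi}{2})$. On the other hand the lower bound in (\ref{equation-f-j-explicit}) gives $f_j(\l_j)\geq c_j\tan\l_j$, and $c_j=\tfrac{N-2+\sqrt{(N-2)^2+4\g_j}}{2}\to\infty$. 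The resulting contradiction forces $\l^*=0$. The only step that requires care is the algebraic cancellation in the formula for $\sigma_j'(\l_j)$; once the clean expression $\tilde u'(\l_j)(N-1-\g_j)/\cos^2\l_j$ is identified, the rest of the proposition follows essentially formally from the lemmas already proved.
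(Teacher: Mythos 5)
Your proposal is correct and follows essentially the same route as the paper: both compute $\sigma_j'(\l_j)=\tilde u'(\l_j)(N-1-\g_j)/\cos^2\l_j$ by differentiating (\ref{vapro}) and eliminating $\tilde u''$ and $f_j'$ via (\ref{eq: seconderini}), (\ref{equation-f-j}), and the relation $\sigma_j(\l_j)=0$, then invoke Lemma~\ref{eq: eges}, Lemma~\ref{eq:Uniquezero}, and the lower bound $f_j\ge c_j\tan\l$ from (\ref{equation-f-j-explicit}). The only cosmetic difference is in part (iii): you argue by contradiction with a limit $\l^*>0$, while the paper reads the inequality $1\ge(c_j-N+1)\l_j\tan\l_j$ directly (via (\ref{ine der}) and (\ref{equation-f-j-explicit})) to conclude $\l_j\to 0$; both rest on the same estimate.
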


\proof
Let  $j\geq2$, and suppose that $\l_* \in (0,\frac{\pi}{2})$ satisfies $\sigma_j(\l_*)=0$.  We first claim that
\begin{equation}
  \label{eq:claim-sigma-prime-pos}
\sigma'_j(\l_*)>0.
\end{equation}
Indeed, differentiating (\ref{vapro}) at $\lambda= \lambda_*$  gives
\begin{equation}
  \label{eq:claim-sigma-prime-pos-1}
\s_j'( \l_*)=  \tilde u''(\lambda_*) \Bigl((N-1)\tan \tilde \l -
f_j(\l_* )\Bigr) +  \tilde u'(\lambda_*)
\Bigl(\frac{N-1}{\cos^2 \l_*} -  f_j'(\l_* )\Bigr).
\end{equation}
Moreover, (\ref{vapro}) gives
$$
0= \sigma_j(\l_*)= (N-1) \tan \lambda_*\: \tilde u'(\lambda_*)   - f_j(\lambda_*)\tilde u'(\lambda_*)-1,
$$
and thus, by \eqref{eq: seconderini},
$$
\tilde{u}''(\l_*)=-1+(N-1)\tan \l_*\: \tilde{u}'(\l_*)= f_j(\lambda_*)\tilde{u}'(\l_*).
$$
Inserting this in (\ref{eq:claim-sigma-prime-pos-1}) and using (\ref{equation-f-j}), we deduce that
$$
\s_j'(\l_*) =\tilde{u}'(\l_*)\Bigl((N-1) \tan \l_* \:f_j(\l_* )- f_j^2(\l_* )+\frac{N-1}{\cos^2 \l_*}- f_j'(\l_* )\Bigr)= \tilde{u}'(\l_*)\frac{N-1-\g_j}{\cos^2 \l_*}.
$$
Since  $\tilde{u}'<0$ on $(0,\frac{\pi}{2})$ and $\g_j>N-1$ for $j\geq 2$, (\ref{eq:claim-sigma-prime-pos}) follows.

By combining (\ref{eq:claim-sigma-prime-pos}), Lemma~\ref{eq: eges} and the intermediate value theorem, it then follows that
for every $j \ge 2$ there exists a unique $\l_j\in (0, \frac{\pi}{2})$ such that $\sigma_j(\lambda_j)=0$, and that (i) holds. Claim (ii) then follows immediately from
Lemma~\ref{eq:Uniquezero}. Moreover, by (\ref{ine der}), (\ref{vapro}) and (\ref{equation-f-j-explicit}) we have
$$
1 = \sigma(\lambda_j)+1 = \tilde u'(\lambda_j) \Bigl((N-1) \tan \lambda_j     - f_j(\lambda_j) \Bigr) \ge (c_j -N+1) \lambda_j  \tan \lambda_j
$$
for $j \ge 2$. Since $c_j \to \infty$,  it follows that $\lambda_j \to 0$ as $j \to \infty$, as claimed in (iii).
  \QED

\section{Mapping properties in Sobolev spaces}
\label{sec:function-spaces}

In the following, we consider the Sobolev spaces
\begin{equation}
  \label{eq:def-hpe}
  H^{k} := H^k(S^{N-1}), \qquad k \in \N \cup
\{0\},
\end{equation}
and as usual we put $L^2:= H^{0}$. Note that $L^2$ is a Hilbert space with scalar product
$$
(u,v) \mapsto \langle u,v \rangle_{L^2} := \int_{S^{N-1}} u v\,d\sigma \qquad \text{for $u,v \in L^2$.}
$$
In the following, for $\ell \in \N \cup \{0\}$, we let
\begin{equation}
  \label{eq:def-Vell}
V_\ell \,\subset \,\bigcap_{k \in \N} H^k
\end{equation}
denote the space of spherical harmonics of degree $\ell$. Moreover, we let $P_\ell: L^2 \to L^2$ denote the $\langle \cdot,\cdot \rangle_{L^2}$-orthogonal projections on $V_\ell$ and
\begin{equation}
  \label{eq:def-Wjell}
W^k_\ell := \{v \in H^k\::\: P_\ell v = 0\} \:\subset \: H^k,
\end{equation}
the $\langle \cdot,\cdot \rangle_{L^2}$-orthogonal complements of $V_\ell$ in $H^k$.
We note that the space $H^k$ is characterized by the subspace of functions $v\in L^2$ such that
 \begin{equation}
\label{eq:scp-hj}
\sum_{\ell \in (\N \cup \{0\})} (1+\ell^2)^{k}
\langle P_\ell v, P_\ell v \rangle_{L^2} < \infty
\end{equation}
We may then define a scalar product on $H^k$ by setting
\begin{equation}
  \label{eq:scp-hjA}
(u,v) \mapsto \langle u,v \rangle_{H^k} :=  \sum_{\ell \in (\N \cup \{0\})} (1+\ell^2)^{k}
\langle P_\ell u, P_\ell v \rangle_{L^2}  \qquad \text{for $u,v \in H^k$,}
\end{equation}
and the induced norm of this scalar product is equivalent to the standard norm on $H^k$.
\begin{Proposition}
\label{sec:spectr-prop-line} For fixed $\lambda \in
(0,\frac{\pi}{2})$, the linear map $\mbL_{\lambda}$ defined in
\eqref{eq:diff-H-express} extends to a continuous linear map
\begin{equation}
  \label{eq:prop-extension-1-0}
\mbL_\lambda: H^{2} \to H^{1},\qquad \mbL_\lambda v =
\sum_{\ell \in \N \cup \{0\}} \sigma_\ell(\lambda) P_\ell v.
\end{equation}
Moreover, for any $\ell \in \N \cup \{0\}$, the operator
\begin{equation}
  \label{eq:isomorphism-property}
\mbL_\lambda - \sigma_\ell(\lambda) \id \;:\;  W^2_\ell \to W^1_\ell \qquad \text{is an isomorphism.}
\end{equation}
\end{Proposition}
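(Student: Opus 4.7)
The plan is to combine the spectral identification of $\mathbb{L}_\lambda$ from Proposition~\ref{proposition-eigenvalues} with the Plancherel decomposition $L^2(S^{N-1})=\bigoplus_{\ell\ge 0}V_\ell$ into spherical harmonics and then control the eigenvalue growth by Lemma~\ref{j-asymptotics} to get both the continuity bound and the invertibility away from each eigenvalue.

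First I would verify the formula \eqref{eq:prop-extension-1-0} on the dense subspace $C^{2,\alpha}\subset H^2$: for $v\in C^{2,\alpha}$ the partial sums $v_n=\sum_{\ell\le n}P_\ell v$ lie in $C^\infty$ and converge to $v$ in $H^2$, while Proposition~\ref{proposition-eigenvalues} gives $\mathbb{L}_\lambda v_n=\sum_{\ell\le n}\sigma_\ell(\lambda)P_\ell v$. The extension to $H^2$ is then forced by the continuity estimate below. Using the norm \eqref{eq:scp-hjA} and the bound $|\sigma_\ell(\lambda)|\le C_\lambda(1+\ell)$ from Lemma~\ref{j-asymptotics},
$$
\|\mathbb{L}_\lambda v\|_{H^1}^2=\sum_{\ell\ge 0}(1+\ell^2)\,\sigma_\ell(\lambda)^2\,\|P_\ell v\|_{L^2}^2\le C_\lambda'\sum_{\ell\ge 0}(1+\ell^2)^2\,\|P_\ell v\|_{L^2}^2=C_\lambda'\,\|v\|_{H^2}^2,
$$
so $\mathbb{L}_\lambda$ extends as a continuous map $H^2\to H^1$ satisfying \eqref{eq:prop-extension-1-0} by density.

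For the isomorphism claim \eqref{eq:isomorphism-property}, fix $\ell$. On $W^2_\ell$ we have $(\mathbb{L}_\lambda-\sigma_\ell(\lambda)\mathrm{Id})v=\sum_{k\ne\ell}(\sigma_k(\lambda)-\sigma_\ell(\lambda))P_k v$, and the range lies in $W^1_\ell$ by construction. Lemma~\ref{eq:Uniquezero} ensures $\sigma_k(\lambda)\ne\sigma_\ell(\lambda)$ for every $k\ne\ell$, and Lemma~\ref{j-asymptotics} implies $|\sigma_k(\lambda)-\sigma_\ell(\lambda)|\to\infty$ as $k\to\infty$ at rate $k$; combined with the fact that only finitely many indices $k<\ell$ exist, this yields a uniform positive lower bound $|\sigma_k(\lambda)-\sigma_\ell(\lambda)|\ge c_{\ell,\lambda}$ for all $k\ne\ell$, as well as the asymptotic equivalence $|\sigma_k(\lambda)-\sigma_\ell(\lambda)|\asymp k$ for $k$ large. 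I would then define the candidate inverse
$$
T_\ell: W^1_\ell\to W^2_\ell,\qquad T_\ell w=\sum_{k\ne\ell}\frac{P_k w}{\sigma_k(\lambda)-\sigma_\ell(\lambda)},
$$
and verify its continuity by
$$
\|T_\ell w\|_{H^2}^2=\sum_{k\ne\ell}\frac{(1+k^2)^2}{(\sigma_k(\lambda)-\sigma_\ell(\lambda))^2}\,\|P_k w\|_{L^2}^2\le C_{\ell,\lambda}\sum_{k\ne\ell}(1+k^2)\,\|P_k w\|_{L^2}^2\le C_{\ell,\lambda}\,\|w\|_{H^1}^2,
$$
where the multiplier bound $(1+k^2)/(\sigma_k-\sigma_\ell)^2\le C_{\ell,\lambda}$ uses the uniform positive gap for small $k\ne\ell$ and the linear asymptotics for large $k$. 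The two compositions $T_\ell\circ(\mathbb{L}_\lambda-\sigma_\ell(\lambda)\mathrm{Id})$ and $(\mathbb{L}_\lambda-\sigma_\ell(\lambda)\mathrm{Id})\circ T_\ell$ are immediately the identity on the respective spaces by the spectral formula, which gives the isomorphism.

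The only delicate point is the uniform spectral gap control $|\sigma_k(\lambda)-\sigma_\ell(\lambda)|\ge c_{\ell,\lambda}>0$, but this is an easy consequence of strict monotonicity (Lemma~\ref{eq:Uniquezero}) applied to the finitely many indices $k<\ell$ together with the coercivity $\sigma_k(\lambda)\to\infty$ at rate $k$ from Lemma~\ref{j-asymptotics} for the tail. Once that gap is in hand, the whole proposition reduces to a routine Fourier multiplier computation on $S^{N-1}$.
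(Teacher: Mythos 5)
Your argument is essentially the same as the paper's: both prove the continuous extension by the linear upper bound $|\sigma_\ell(\lambda)|\lesssim \ell$ from Lemma~\ref{j-asymptotics} together with the norm \eqref{eq:scp-hjA} and density of spherical harmonics, and both establish \eqref{eq:isomorphism-property} by exhibiting the Fourier-multiplier inverse $w\mapsto\sum_{k\ne\ell}(\sigma_k(\lambda)-\sigma_\ell(\lambda))^{-1}P_k w$ and bounding it via the linear growth of $\sigma_k(\lambda)$. If anything, you are a touch more careful than the paper in explicitly invoking Lemma~\ref{eq:Uniquezero} to rule out $\sigma_k(\lambda)=\sigma_\ell(\lambda)$ for the finitely many $k<\ell$, a fact the paper uses implicitly when asserting the inverse is well-defined from the growth estimate alone.
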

\proof
From the linear asymptotic upper growth estimate for the values $\sigma_j(\lambda)$ given in Lemma~\ref{j-asymptotics}, it readily follows that (\ref{eq:prop-extension-1-0}) defines a continuous linear operator $\mbL_\lambda: H^{2} \to H^{1}$. On finite linear combinations of spherical harmonics, this operator coincides with the operator $\mbL_\lambda: C^{2,\alpha}(S^{N-1}) \to  C^{1,\alpha}(S^{N-1})$ given in \eqref{eq:diff-H-express} by construction. Since linear combinations of spherical harmonics are dense both in $C^{2,\alpha}(S^{N-1})$ and in $H^2$, the claimed extension property follows by continuity.

Finally, to show (\ref{eq:isomorphism-property}), we fix $\ell \in \N \cup \{0\}$, and we first note that $\mbL_\lambda$ indeed maps $W^2_\ell$ into $W^1_\ell$ by definition. Moreover, the lower growth estimate for the values $\sigma_j(\lambda)$ given in Lemma~\ref{j-asymptotics} shows that the linear operator
$$
W^1_\ell \to W^2_\ell, \qquad w \mapsto \sum_{\stackrel{m \in \N \cup \{0\}}{m \not=\ell}} \frac{1}{\sigma_m(\lambda)-\sigma_\ell(\lambda)}P_m w
$$
is well-defined and continuous. Moreover, by direct computation, its inverse  is  $\mbL_\lambda - \sigma_\ell(\lambda) \id \;:\;  W^2_\ell \to W^1_\ell$. Hence (\ref{eq:isomorphism-property}) is proved.
\QED

\begin{Remark}
  \label{sec:spectr-remark-1}
Since operator $\mbL_\lambda: H^2 \to H^1$ given in (\ref{eq:prop-extension-1-0}) is a continuous linear extension of the operator $\mbL_\lambda:
C^{2,\alpha}(S^{N-1}) \to C^{1,\alpha}(S^{N-1})$ defined in Proposition~\ref{sec:peri-solut-serr-2}, it can also be characterized as follows. Given $\omega \in H^2$, let  $\psi \in W^{1,2}(\Omega)$ be the unique weak solution of the problem
\begin{equation}
  \label{eq:psi-omega-0}
\left \{
  \begin{aligned}
 &\Delta_{g_\lambda} \psi =0 && \qquad \text{in  $\O$},\\
&\psi (\sigma,t)= \omega(\sigma)  && \qquad \text{for $(z,t) \in \partial \O$,}
  \end{aligned}
\right.
\end{equation}
which is axially symmetric in the $\sigma$-variable,  where $\Delta_{g_\lambda}$ is given by \eqref{eq:laplace-lambda}. By standard elliptic regularity theory, we then have $\psi \in W^{2,2}(\Omega)$, and $\mbL_\lambda \omega$ is given by
$$
[\mbL_\lambda \omega] (\sigma) = -\frac{\tilde u'(\lambda)}{\lambda}
\partial_{t}\, \psi (\sigma,1) +\tilde u''(\lambda) \omega(\sigma)
\qquad \text{for a.e. $\sigma \in S^{N-1}$,}
$$
where $\partial_{t} \psi$ is considered in the sense of traces. This can be easily seen by approximating $\omega$ in $H^2$ with functions in $C^{2,\alpha}(\Omega)$ and using standard elliptic estimates.
\end{Remark}

\section{Proof of Theorem~\ref{teo1}}
\label{sec:compl-proof-theor}

In the following, we consider the spaces
\begin{align*}
&\calX:= \{\vp \in C^{2,\alpha}(S^{N-1})\::\: \text{$\vp$ axially symmetric}\},\\
&\calY:= \{\vp \in C^{1,\alpha}(S^{N-1})\::\: \text{$\vp$ axially symmetric}\}.
\end{align*}
Here axial symmetry is defined with respect to the axis $\R e_1$ as in the introdution, see (\ref{eq:def-axial-symmetry}).
We also consider the nonlinear operator $H$ defined in (\ref{eq:def-H}), and we note that $H$ maps $\cU \cap \calX$ into $\calY$
by Lemma~\ref{sec:peri-solut-serr-1}(iii). Consider the open set
\begin{equation}
  \label{eq:def-cO}
\cO:= \{(\lambda,\vp) \in (0,\frac{\pi}{2}) \times \calX \::\: - \lambda
<\vp <- \lambda+\frac{\pi}{2}\}  \subset \R \times \calX.
\end{equation}
In view of \eqref{eq:def-H} and \eqref{calcH-lambda}, the proof of Theorem~\ref{teo1} will be completed by applying the Crandall-Rabinowitz bifurcation theorem (see Theorem~\ref{eq: Cradal Rabi}) to the smooth nonlinear operator
\be \label{eq:def--G}
G: \cO \subset \R \times \calX  \to \calY, \qquad G(\lambda, \vp) =H(\lambda+\vp)- H(\l)=
H(\lambda+\vp)-\tilde u'(\lambda).
\ee
By \eqref{calcH-lambda}, we have $G(\lambda,0)=0$ in $\calY$ for all $\lambda \in (0,\frac{\pi}{2})$. Moreover,
\begin{equation}
  \label{eq:cGH-lambda}
 D_\vp G (\lambda,0)= DH(\l)\big|_\calX = \mbL_\lambda|_\calX \in \cL(\calX,\calY).
\end{equation}
 We have the following

\begin{Proposition}\label{propPhi}
Consider  $\lambda_j$, with  $j \ge 2$, as given in Proposition \ref{eq:OK-egesff}.
For $j \ge 2$, the linear operator
$$
\mbL_j:=\mbL_{\lambda_j}\big|_\calX \in \cL(\calX,\calY )
$$
has the following properties.
\begin{itemize}
\item[(i)] The kernel $N(\mbL_j)$ of $\mbL_j$ is spanned by $Y_j$, the
unique $L^2$-normalized real-valued axially symmetric spherical harmonic of degree $j$
defined in the introduction.
\item[(ii)] The range of $\mbL_j$ is given by
$$
R(\mbL_j)= \Bigl \{v \in \calY  \::\: \int_{S^{N-1}} v Y_j\,d\sigma = 0 \Bigr\}.
$$
\end{itemize}
Moreover,
\begin{equation}
  \label{eq:transversality-cond}
\partial_\lambda \Bigl|_{\lambda= \lambda_j} \mbL_\lambda Y_j \;\not  \in \; R(\mbL_j).
 \end{equation}
\end{Proposition}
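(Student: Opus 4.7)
The proof combines the spectral information of Section~\ref{sec:spectr-prop-line-1} with the Sobolev isomorphism from Proposition~\ref{sec:spectr-prop-line}. The key observation is that within each eigenspace $V_\ell$ of $\mbL_\lambda$ the axially symmetric subspace is one-dimensional and spanned by $Y_\ell$, so on $\calX$ and $\calY$ the operator $\mbL_{\lambda_j}$ is ``diagonal'' in the basis $\{Y_\ell\}_{\ell\geq 0}$, with eigenvalues $\sigma_\ell(\lambda_j)$. I first check that $\sigma_\ell(\lambda_j)\neq 0$ whenever $\ell\neq j$: the explicit values $\sigma_0(\lambda)=\tilde u'(\lambda)(N-1)\tan\lambda-1<0$ and $\sigma_1(\lambda)\equiv -1$ from Proposition~\ref{proposition-eigenvalues} handle $\ell=0,1$; for $\ell\geq 2$, $\ell\neq j$, Proposition~\ref{eq:OK-egesff} shows that $\sigma_\ell$ vanishes only at $\lambda_\ell$ and that $\lambda_\ell\neq\lambda_j$.

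For part (i), any $\omega\in\calX$ admits an expansion $\omega=\sum_\ell c_\ell Y_\ell$ that converges in $H^2$, and Proposition~\ref{sec:spectr-prop-line} then gives $\mbL_{\lambda_j}\omega=\sum_\ell c_\ell\sigma_\ell(\lambda_j)Y_\ell$. The condition $\mbL_j\omega=0$ therefore forces $c_\ell=0$ for $\ell\neq j$, so $\omega\in\R Y_j$. The inclusion $R(\mbL_j)\subset\{v\in\calY:\int_{S^{N-1}}vY_j\,d\sigma=0\}$ in (ii) follows by the same expansion, since $\int_{S^{N-1}}\mbL_j\omega\cdot Y_j\,d\sigma=c_j\sigma_j(\lambda_j)=0$ by orthonormality of the $Y_\ell$.

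For the reverse inclusion in (ii) the strategy is to invert in Sobolev spaces and then upgrade regularity. Given $v\in\calY$ with $\int_{S^{N-1}}v Y_j\,d\sigma=0$, I view $v$ as an element of $W^1_j\cap H^1$ and apply the isomorphism property~(\ref{eq:isomorphism-property}) of Proposition~\ref{sec:spectr-prop-line} to produce a unique $\omega\in W^2_j\subset H^2$ with $\mbL_{\lambda_j}\omega=v$. Axial symmetry of $\omega$ is a consequence of this uniqueness together with the rotational invariance of $g_\lambda$ (hence of $\mbL_{\lambda_j}$) under orthogonal transformations fixing $e_1$. The main obstacle is the regularity upgrade from $H^2$ to $C^{2,\alpha}$. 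I plan to use that $\mbL_\lambda$ is, up to the zeroth-order term $\tilde u''(\lambda)\id$, the scaled Dirichlet-to-Neumann operator $-\tfrac{\tilde u'(\lambda)}{\lambda}\partial_t\psi_{\cdot,\lambda}(\cdot,1)$ associated to the smooth, uniformly elliptic operator $\Delta_{g_\lambda}$ on the cylinder $\overline\Omega$; this realizes $\mbL_\lambda$ as an elliptic pseudodifferential operator of order $1$ on $S^{N-1}$, consistent with the linear asymptotics $\sigma_\ell(\lambda)\sim\ell$ established in Lemma~\ref{j-asymptotics}. Its restricted inverse on $\{v\in C^{1,\alpha}(S^{N-1}):\int vY_j\,d\sigma=0\}$ is then a pseudodifferential operator of order $-1$, and classical Schauder regularity for such operators gives the desired gain $\omega\in C^{2,\alpha}(S^{N-1})\cap\calX$.

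Finally, the transversality condition~(\ref{eq:transversality-cond}) falls out immediately: since $Y_j$ does not depend on $\lambda$, differentiating $\mbL_\lambda Y_j=\sigma_j(\lambda)Y_j$ at $\lambda=\lambda_j$ yields $\partial_\lambda\Bigl|_{\lambda=\lambda_j}\mbL_\lambda Y_j=\sigma_j'(\lambda_j)Y_j$, and
\[
\int_{S^{N-1}}\Bigl(\partial_\lambda\Bigl|_{\lambda=\lambda_j}\mbL_\lambda Y_j\Bigr)Y_j\,d\sigma=\sigma_j'(\lambda_j)>0
\]
by Proposition~\ref{eq:OK-egesff}(i), so by the range characterization in (ii) this function does not belong to $R(\mbL_j)$.
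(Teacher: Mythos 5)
Your argument is correct and follows the paper's overall architecture: spectral diagonalization of $\mbL_{\lambda_j}$ in the basis $\{Y_\ell\}$ reduces (i) and (ii) to showing $\mbL_j$ is an isomorphism between the $Y_j$-orthogonal complements $\calX_j\subset\calX$ and $\calY_j\subset\calY$; Sobolev inversion via Proposition~\ref{sec:spectr-prop-line} produces a preimage $\omega\in H^2$; a regularity upgrade then yields $\omega\in C^{2,\alpha}$; and transversality follows from $\partial_\lambda\bigl|_{\lambda=\lambda_j}\mbL_\lambda Y_j=\sigma_j'(\lambda_j)Y_j$ with $\sigma_j'(\lambda_j)>0$. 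The genuine divergence is in the regularity step. The paper performs an elementary bootstrap: starting from $\psi\in W^{2,2}(\Omega)$, it iterates the trace theorem and Grisvard's $L^p$-regularity for the Robin-type boundary condition $-\frac{\tilde u'(\lambda_j)}{\lambda_j}\partial_\nu\psi+\psi=g$ to push $\psi$ into $W^{2,p}(\Omega)$ for every $p<\infty$, then Sobolev embedding gives $\psi\in C^{1,\alpha}(\overline\Omega)$, and a final Schauder estimate delivers $\psi\in C^{2,\alpha}(\overline\Omega)$, hence $\omega\in C^{2,\alpha}(S^{N-1})$. You instead observe that $\mbL_\lambda-\tilde u''(\lambda)\id$ is a scaled Dirichlet-to-Neumann map of the smooth uniformly elliptic operator $\Delta_{g_\lambda}$ on the cylinder, hence a classical elliptic pseudodifferential operator of order $1$ on $S^{N-1}$ (as the symbol calculus and the asymptotics $\sigma_\ell(\lambda)\sim\ell$ both confirm), and use order-$(-1)$ parametrix regularity in H\"older spaces to pass directly from $v\in C^{1,\alpha}$ to $\omega\in C^{2,\alpha}$. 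Your route is shorter and explains the exact one-derivative gain conceptually, at the cost of invoking pseudodifferential machinery; the paper's route is longer but entirely self-contained within classical Sobolev and Schauder theory. One small gloss on your side: the ``restricted inverse'' on the $Y_j$-orthogonal subspace is not itself a pseudodifferential operator, but writing $\omega=Qv-R\omega$ with a parametrix $Q$ of order $-1$ and a smoothing remainder $R$ yields the same conclusion, so the gap is cosmetic rather than substantive.
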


\proof
By definition we have $\sigma_j(\lambda_j)=0$, which by
Proposition~\ref{proposition-eigenvalues} is equivalent to $\mbL_jY_j  = 0$. We consider the subspaces
\begin{align}
\calX_j &:= \Bigl \{v \in  \calX \::\: \int_{S^{N-1}} v Y_j\,d\sigma = 0  \Bigr\} \subset \calX, \label{def-X*}\\
\calY_j &:= \Bigl \{v \in \calY \::\: \int_{S^{N-1}} v Y_j\,d\sigma = 0 \Bigr\} \subset \calY. \nonumber
\end{align}
To show properties (i) and (ii), it clearly suffices to prove that
\begin{equation}
  \label{eq:isomorphism}
\text{$\mbL_j$ defines an isomorphism between $\calX_j$ and $\calY_j$.}
\end{equation}
To prove (\ref{eq:isomorphism}), we need to introduce further spaces.
We recall the definition of the Sobolev space $H^k$ in (\ref{eq:def-hpe}) and put
\begin{equation}
  \label{eq:def-hpes}
H^{k}_{ax} := \left\{v \in H^k  \::\: \text{$v$ axially symmetric} \right\}, \qquad k \in \N \cup \{0\},
\end{equation}
noting that $\calX = H^{2}_{ax} \cap C^{2,\alpha}(S^{N-1})$ and $\calY= H^{1}_{ax} \cap C^{1,\alpha}(S^{N-1})$. Proposition~\ref{sec:spectr-prop-line} implies that $\mbL_j$ defines a continuous linear operator
\begin{equation}
  \label{eq:formular-H*-perm}
\mbL_j : H^2_{ax} \to H^1_{ax}, \qquad \mbL_j v = \sum_{\ell \in \N \cup \{0\}} \sigma_\ell (\lambda_j) P_\ell v.
\end{equation}
Next we put
$$
\tilde V^{k}_j:=  H^k_{ax} \cap V_j, \quad  \tilde W^{k}_j:=  H^k_{ax} \cap W_j^k \quad \subset H^k_{ax}  \qquad \text{for $k=1,2$,}
$$
where the spaces $V_j$ resp. $W_j^k$ are defined in (\ref{eq:def-Vell}) and (\ref{eq:def-Wjell}), respectively.
  We note that $\tilde V^{k}_j$ is one-dimensional and spanned by the function $Y_j$ given in $(i)$. We then deduce from Proposition~\ref{sec:spectr-prop-line} and the property $\sigma_j(\lambda_j)=0$ that
\begin{equation}
  \label{eq:Sobolev-invertible}
\text{$\mbL_j$ defines an isomorphism $\tilde W^2_j \to \tilde W^1_j$}.
\end{equation}
Moreover, since $\calX_j= \tilde W^2_j \cap \calX$ and $\calY_j = \tilde W^1_j \cap \calY$, we see that $\mbL_j: \calX_j \to \calY_j$
is well defined and injective. To establish surjectivity, we let $f \in \calY_j$. By (\ref{eq:Sobolev-invertible}),
there exists $\omega \in \tilde W^2_j \subset H^2$ such that $\mbL_j \omega=f$.

As noted in Remark~\ref{sec:spectr-remark-1}, we then have
\begin{equation}
   \label{eq:corr-version-add-0}
- \frac{\tilde u'(\lambda_j)}{\lambda_j} \partial_\nu  \psi (\sigma, \pm 1) + \tilde u''(\lambda_j)
  \omega(\sigma)= f(\sigma)\quad \text{for a.e. $\sigma \in S^{N-1}$,}
\end{equation}
where $\psi \in W^{2,2}_{loc}(\Omega)$ is the unique solution of (\ref{eq:psi-omega-0}) with $\lambda= \lambda_j$, recalling that $\O=S^{N-1}\times (-1,1)$.
We claim that
\begin{equation}
  \label{eq:corr-version-add-1}
\psi \in C^{2,\alpha}(\overline \O).
\end{equation}
This regularity property will follow from \cite[Theorem 6.3.2.1]{grisvard} once we have shown that
\begin{equation}
  \label{eq:corr-version-add-2}
\psi \in W^{2,p}(\O) \qquad \text{for all $p \in (1,\infty)$.}
\end{equation}
Indeed, if (\ref{eq:corr-version-add-2}) holds, then Sobolev embeddings also give that $\psi \in C^{1,\alpha}(\overline \O)$ and therefore $\omega \in C^{1,\alpha}(S^{N-1})$ by (\ref{eq:psi-omega-0}). Consequently, \cite[Theorem 6.3.2.1]{grisvard} applies with the order $d=1$ of the boundary operator (see \cite[Section 2.1]{grisvard} and gives (\ref{eq:corr-version-add-1}).

To see (\ref{eq:corr-version-add-2}), we show by induction that
\begin{equation}
  \label{eq:corr-version-add-3}
\psi \in W^{2,p_k}(\O)
\end{equation}
for a sequence of numbers $p_k \in [2,\infty)$ satisfying $p_0=2$ and $p_{k+1} \ge \frac{N-1}{N-2}p_{k}$ for $k \ge 0$. We already know that (\ref{eq:corr-version-add-3}) holds for $p_0=2$. So let us assume that (\ref{eq:corr-version-add-3}) holds for some $p_k \ge 2$. We distinguish two cases.\\
If $p_k < N$, then the trace theorem \cite[Theorem
5.4]{Adams} implies that
$$
\psi \big|_{\partial
\O}\in  W^{1,p_{k+1}}(\partial \O) \qquad \text{with $p_{k+1}:= (\frac{N-1}{N-p_k})p_k \ge \frac{N-1}{N-2} p_k$,}
$$
and thus $\omega \in W^{1,p_{k+1}}(S^{N-1})$. Since also $f \in C^{1,\alpha}(S^{N-1}) \subset W^{1,p_{k+1}}(S^{N-1})$ and, by (\ref{eq:corr-version-add-0}),
$$
- \frac{\tilde u'(\lambda_j)}{\lambda_j} \partial_\nu  \psi (\sigma, \pm 1) + \psi(\sigma, \pm 1)=g(\sigma)\quad \text{for a.e. $\sigma \in S^{N-1}$,}
$$
with
$$
g =   (1-\tilde{u}''(\lambda_j))\omega+f\in W^{1,p_{k+1}}(S^{N-1}),
$$
we may deduce from \cite[Theorem 2.4.2.6]{grisvard} that $\psi \in W^{2,p_{k+1}}(\O)$.\\
If $p_k \ge  N$, the trace theorem implies that $\omega\in W^{1,p}_{loc}(\partial \O)$ for any $p>2$, and then we may repeat the above argument with arbitrarily chosen $p_{k+1} \ge \frac{N-1}{N-2}p_{k}$ to infer again that $\psi \in W^{2,p_{k+1}}(\O)$.\\
We thus conclude that (\ref{eq:corr-version-add-2}) holds, and hence (\ref{eq:corr-version-add-1}) follows. By passing to the trace again, we then conclude that $\omega \in  C^{2,\alpha}(S^{N-1})$. Consequently, $\omega \in C^{2,\alpha}(S^{N-1}) \cap \tilde W^2_j = \calX_j$, and thus $\mbL_j: \calX_j \to \calY_j$ is also surjective.  Hence (\ref{eq:isomorphism}) is true.

We finally derive ~\eqref{eq:transversality-cond} from the identity
\begin{equation}\label{Dlambda}
\partial_\lambda \Bigl|_{\lambda= \lambda_j} \mbL_\lambda Y_j =  \partial_\lambda
\Bigl|_{\lambda= \lambda_j}\sigma_j(\lambda) Y_j = \sigma'_j(\lambda_j) Y_j
\end{equation}
and Proposition \ref{eq:OK-egesff}.
\QED

\noindent {\bf Proof of Theorem~\ref{teo1} (completed).}
Recalling \eqref{eq:def--G} and \eqref{eq:def-H},  we shall  apply the Crandall-Rabinowitz bifurcation Theorem to solve the equation
\be \label{eq:final-eq-to-solve}
G(\l,\vp)=H(\lambda+\vp)-H(\lambda)= \partial_{\nu_{\phi}} u_{\phi} (e_1,\cdot )- \widetilde{u}'(\l)= 0,
\ee
where $\phi=\l+\vp \in \cU$ and  the function  $u_{\phi}  \in C^{2,\a}(\ov{\O_{\phi}})$  is the unique solution to  the Dirichlet boundary value problem
\begin{equation*}
\left\{
  \begin{aligned}
    -\Delta_{g_\phi} u_\phi&= 1 &&\qquad \textrm{in $\O$,}\\
u_\phi&=0 &&\qquad \textrm{on $\partial \O$},
  \end{aligned}
\right.
 \end{equation*}
we obtained from  Lemma \ref{sec:peri-solut-serr-1}. Once this is done,
\eqref{eq:reformulated-overd-H} follows and thus we get \eqref{eq:reformulated-overd}, which is equivalent to \eqref{eq:Proe1-1-neumann} with $-c= H(\lambda)$. \\
 To solve equation \eqref{eq:final-eq-to-solve},   we fix $j \ge 2$. Moreover, we let
  $\lambda_j$ be defined as in Proposition \ref{eq:OK-egesff}, and we let $\calX_j$ be defined as in (\ref{def-X*}).

By Proposition~\ref{propPhi}, the assumptions of the Crandall-Rabinowitz bifurcation theorem -- as stated in Theorem~\ref{eq: Cradal Rabi} in the appendix --
are satisfied with $\calX = \calX_j$, $\calY= \calY_j$, $\cO$ and $G$ defined in (\ref{eq:def-cO}) and \eqref{eq:def--G} and
$$
\cZ = \cZ_j:= \Bigl \{v \in \calX_j  \::\: \int_{S^{N-1}} v Y_j\,d\sigma = 0 \Bigr\}.
$$
Applying this theorem, we then find ${\e_j}>0$ and a smooth curve
$$
(-{\e_j},{\e_j}) \to \cO, \qquad s \mapsto (\lambda_j(s),\varphi_s^j)
$$
such that
\begin{enumerate}
\item[(i)] $G(\lambda_j(s),\varphi_s^j)=0$ for $s \in (-{\e_j},{\e_j})$.
\item[(ii)] $\lambda_j(0)= \lambda_j$.
\item[(iii)]  $\varphi_s^j = s \bigl(Y_j + w_s^j\bigr)$ for $s \in (-\e_j,\e_j)$ with a smooth curve
$$
(-{\e_j},{\e_j}) \to \cZ_j, \qquad s \mapsto w_s^j
$$
satisfying $w_0^j=0$.
\end{enumerate}
Since  $(\lambda_j(s),\varphi_s^j)$ is a solution to
\eqref{eq:final-eq-to-solve} for every $s\in (-\e_j, \e_j)$,  the
function  $u_{\phi^{j}_s} \in C^{2,\a}(\ov{\O})$  solves the
overdetermined boundary value  problem
\begin{equation*}
\left\{
  \begin{aligned}
    -\Delta_{g_{\phi_s^j}} u_{\phi^{j}_s} &=1 &&\qquad \textrm{ in } \quad\Omega ,\\
u_{\phi^{j}_s} &=0,\quad \partial_\nu u_{\phi^{j}_s} =\ti{u}'(\lambda_j(s))
&&\qquad \textrm{ on }\quad \partial\Omega ,
  \end{aligned}
\right.
\end{equation*}
where $\phi^{j}_s=\l_j(s)+\vp_s^j$. It then follows from the
reformulation procedure set up in Section~\ref{sec:transf-probl-its}
 that the map
$s \mapsto (\lambda_j(s), \vp_s^j)$  and the function
$u:=u_{\phi^{j}_s}\circ\Psi_{\phi^j_s}^{-1}\circ\Upsilon^{-1}:
D_{\phi^{j}_s}\to \R$ have the properties asserted in Theorem
\ref{teo1}. \QED

\section{Appendix}
\label{sec:appendix}
We state here a version of the
Crandall-Rabinowitz bifurcation theorem used in Section~\ref{sec:compl-proof-theor}, which is slightly different from but obviously equivalent to \cite[Theorem 1.7]{M.CR}. For the proof and classical applications, see e.g. \cite{M.CR}, \cite{H.Kielhofer} and \cite{J. Smoller}.

\begin{Theorem}[Crandall-Rabinowitz bifurcation theorem] \label{eq: Cradal Rabi}
Let $\calX$  and $\calY$ be two Banach spaces, $\cO \subset \R \times \calX$ be an open set, where the elements of $\R \times \calX$ are denoted by $(\lambda, \varphi)$. Let $I \subset \R$ be an open interval such that $I \times \{0\} \subset\calX$, and let  $G: \cO\rightarrow \calY$  be a twice
continuously differentiable function such that
 \begin{enumerate}
\item
$ G(\lambda, 0)=0\quad \textrm{ for all }\quad \lambda \in I,$
 \item
 $\ker(D_\varphi G(\lambda_*, 0))=\R x_*$  for some $\lambda_*\in I$  and $x_* \in \calX\setminus \{0\}$.
 \item
 $\textrm{Codim Im}(D_\varphi G(\lambda_*, 0))=1$
 \item
 $D_\lambda D_\varphi G(\lambda_*, 0)(x_*) \notin \textrm{Im}(D_\varphi G(\lambda_*, 0))$.
 \end{enumerate}
 Then for any complement $\cZ$ of the subspace $\R x_* \subset \calX$,  there exists a continuous curve
   $$(-\e, \e)\longrightarrow [\R \times \cZ] \cap \cO, \quad s\mapsto (\lambda(s), w(s))$$  such that
 \begin{enumerate}
 \item
 $\lambda(0)=\lambda_*, \quad \varphi(0)=0$
 \item
 $s(x_*+w(s))\in \cO$
 \item
 $G(\lambda(s), s(x_*+w(s))=0$
 \end{enumerate}
 Moreover, the set of solutions to the equation  $G(\lambda, u)=0$ in a neighborhood of  $(\lambda_*, 0)$
 is given by  the curve $\{ (\lambda, 0), \lambda\in \R\}$  and  $\{ s(x_*+w(s)), s\in(-\e, \e)\}$.
\end{Theorem}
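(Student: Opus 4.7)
The plan is a Lyapunov--Schmidt reduction combined with two applications of the implicit function theorem (IFT). Set $L_* := D_\varphi G(\lambda_*, 0)$. By hypotheses (2) and (3), one can choose a complement $\cZ$ of $\ker L_* = \R x_*$ in $\calX$ and a vector $y_* \in \calY \setminus \text{Im}(L_*)$, producing continuous projections $P: \calY \to \text{Im}(L_*)$ and $Q := I - P$; moreover $L_*|_\cZ: \cZ \to \text{Im}(L_*)$ is a topological isomorphism since $\cZ \cap \ker L_* = \{0\}$ and $L_*(\cZ) = \text{Im}(L_*)$. The plan is to seek solutions of $G(\lambda, \varphi) = 0$ in the form $\varphi = s(x_* + w)$ with $s \in \R$ and $w \in \cZ$ both small.

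The direct equation $G(\lambda, s(x_* + w)) = 0$ is degenerate at $s = 0$ because $L_* x_* = 0$, so I would extract the trivial branch via the Hadamard-type formula
\[
F(\lambda, s, w) := \int_0^1 D_\varphi G\bigl(\lambda, \tau s(x_* + w)\bigr)(x_* + w)\, d\tau,
\]
which is $C^1$ near $(\lambda_*, 0, 0)$ (by the $C^2$-smoothness of $G$), agrees with $s^{-1} G(\lambda, s(x_* + w))$ for $s \ne 0$ thanks to hypothesis (1), and extends continuously to $F(\lambda, 0, w) = D_\varphi G(\lambda, 0)(x_* + w)$. For $s \ne 0$ the equation $G = 0$ is equivalent to the split system $PF = 0$, $QF = 0$. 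Since $D_w (PF)(\lambda_*, 0, 0) = P L_*|_\cZ$ is an isomorphism of $\cZ$ onto $\text{Im}(L_*)$, the IFT furnishes a unique $C^1$ map $(\lambda, s) \mapsto w(\lambda, s) \in \cZ$, defined on a neighborhood of $(\lambda_*, 0)$ with $w(\lambda_*, 0) = 0$, solving $PF(\lambda, s, w(\lambda, s)) = 0$.

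It remains to handle the scalar reduced equation $\Psi(\lambda, s) := QF(\lambda, s, w(\lambda, s)) = 0$. Because $L_* x_* = 0$, one has $\Psi(\lambda_*, 0) = QL_* x_* = 0$, and since $L_* \partial_\lambda w(\lambda_*, 0) \in \text{Im}(L_*) \subset \ker Q$, the chain rule gives
\[
\partial_\lambda \Psi(\lambda_*, 0) = Q\, \partial_\lambda F(\lambda_*, 0, 0) = Q\, D_\lambda D_\varphi G(\lambda_*, 0)\, x_*,
\]
which is nonzero precisely by hypothesis (4). A second IFT application then produces a $C^1$ curve $s \mapsto \lambda(s)$ with $\lambda(0) = \lambda_*$ and $\Psi(\lambda(s), s) = 0$ for $|s|$ small. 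Setting $w(s) := w(\lambda(s), s)$ delivers the claimed bifurcating curve, and the local uniqueness of nontrivial solutions in a neighborhood of $(\lambda_*, 0)$ follows from the uniqueness clauses in both IFT applications.

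The main obstacle is the final derivative computation: one must verify that \emph{all} cross-terms arising from the implicit dependence of $w$ on $\lambda$ lie in $\text{Im}(L_*) = \ker Q$ and therefore drop out, isolating exactly the transversality quantity from (4). This is the step where conditions (2)--(4) interact essentially -- the one-dimensional kernel provides the bifurcation direction $x_*$, the codimension-one range reduces the equation to a scalar one, and (4) ensures this scalar equation is solvable for $\lambda$ as a function of $s$, encoding a non-degenerate exchange between the trivial and bifurcating branches as $\lambda$ crosses $\lambda_*$.
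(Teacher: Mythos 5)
The paper does not prove this theorem; it is a classical result stated in the appendix with citations to Crandall--Rabinowitz \cite{M.CR}, Kielh\"ofer \cite{H.Kielhofer}, and Smoller \cite{J. Smoller}. Your argument is the standard Lyapunov--Schmidt reduction via the Hadamard/Taylor integral remainder and a two-stage implicit function theorem, which is exactly what these references do (some of them perform a single IFT application in $(\lambda,w)$ jointly rather than your two-stage split, but the content is the same). The existence part of your proof is sound: the key derivative computation $\partial_\lambda \Psi(\lambda_*,0) = Q\,D_\lambda D_\varphi G(\lambda_*,0)x_*$ is correct, since the only cross-term is $Q L_* \partial_\lambda w(\lambda_*,0)$, which vanishes because $L_*\partial_\lambda w(\lambda_*,0)\in \operatorname{Im}(L_*)=\ker Q$.

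There is, however, a real gap in the final uniqueness claim, which you dispose of too quickly. The IFT gives uniqueness in $(\lambda,s,w)$ coordinates, but an arbitrary nontrivial solution $(\lambda,\varphi)$ near $(\lambda_*,0)$, decomposed as $\varphi=a x_*+z$ with $z\in\cZ$, is \emph{not} automatically in the range of your parametrization $\varphi=s(x_*+w)$ with $(s,w)$ small: if $a=0$ it is not representable at all, and if $a\neq 0$ you must show $\|z/a\|$ is small, which is not obvious since $a$ could be much smaller than $\|z\|$. The missing step is the following. Writing $\widetilde L(\lambda,\varphi):=\int_0^1 D_\varphi G(\lambda,\tau\varphi)\,d\tau$, hypothesis (1) gives $G(\lambda,\varphi)=\widetilde L(\lambda,\varphi)\varphi$, so $P\widetilde L(\lambda,\varphi)(a x_*+z)=0$; since $P\widetilde L(\lambda,\varphi)|_\cZ$ is a near-isomorphism for $(\lambda,\varphi)$ near $(\lambda_*,0)$ and $P\widetilde L(\lambda,\varphi)x_*\to P L_* x_*=0$, one obtains $\|z\|\le \epsilon(\lambda,\varphi)\,|a|$ with $\epsilon\to 0$. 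This simultaneously rules out $a=0$ and shows $\|z/a\|$ is small, so the solution does lie in the IFT chart and the uniqueness clauses apply. Without this estimate the claimed local description of the solution set is unjustified. (You also implicitly assume $\operatorname{Im}(L_*)$ is closed when you invoke a continuous projection $P$; this is part of the standard Fredholm-index-zero hypothesis and is fine, but worth flagging since the theorem statement only asserts codimension one.)
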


%\textbf{Conflict of Interest:} The authors declare that they have no conflicdt of interest.


\begin{thebibliography}
\footnotesize
\bibitem{Adams} R. A.  Adams,   Sobolev spaces.   Academic Press  (1975).

\bibitem{Alexandrov} A. D. Alexandrov,  {Uniqueness Theorem
for surfaces in large I}, Vestnik Leningrad Univ. Math. 11 (1956),
5-17.

\bibitem{Alexandrov-62} A.D. Alexandrov, {A characteristic property of spheres,}
Ann. Mat. Pura Appl. 58 (1962) 303--315.


\bibitem{chavel}
I. Chavel, Eigenvalues in Riemannian geometry. Pure and Applied Mathematics, 115. Academic Press, Inc., Orlando, FL, 1984.

\bibitem{M.CR} M. Crandall and P. Rabinowitz,  Bifurcation from simple
eigenvalues. J. Functional Analysis 8,  321-340 (1971).

\bibitem{Delau} Ch. Delaunay, Sur la surface de r\'evolution dont la courbure
moyenne est constante, J. Math. Pures Appl. 1re. serie 6 (1841),
309-315.

\bibitem{del-Pino-pacard-wei}
M. Del Pino, F. Pacard, J.C.  Wei,  Serrin's overdetermined problem
and constant mean curvature surfaces. Duke Math. J. 164 (2015), no.
14, 2643-2722.

\bibitem{Erdil-V3} A. Erd\'elyi, F. Oberhettinger, W. Magnus and F. Tricomi. Higher Transcendental functions vol. 3 (McGraw Hill, NY, 1953).


\bibitem{FM-Serr} M. M. Fall   and I. A. Minlend, Serrin's over-determined problems on Riemannian manifolds.  Adv. Calc. Var. 8 (2015), no. 4, 371-400.

\bibitem{Fall-Minlend-Weth} M. M. Fall, I. A. Minlend, T. Weth, Unbounded Periodic Solutions to Serrin Overdetermined Boundary
Value Problem. Arch. Ration. Mech. Anal. 223 (2017), no. 2, 737-759. 

\bibitem{farina-valdinoci} A. Farina, E. Valdinoci,  Partially and globally overdetermined problems of elliptic type. Adv. Nonlinear Anal. 1 (2012), no. 1, 27-45.

\bibitem{farina-valdinoci:2010-1}
A. Farina, E. Valdinoci, Flattening results for elliptic PDEs in
unbounded domains with applications to overdetermined problems. Arch. Ration. Mech. Anal. 195 (2010), no. 3, 1025--1058.

\bibitem{farina-valdinoci:2010-2}
A. Farina, E. Valdinoci, Overdetermined problems in unbounded domains with Lipschitz singularities. Rev. Mat. Iberoam. 26 (2010), no. 3, 965-4.

\bibitem{farina-valdinoci:2013-1}
A. Farina, E. Valdinoci, On partially and globally overdetermined problems of elliptic type. Amer. J. Math. 135 (2013), 1699-1726.

\bibitem{farina-valdinoci:2013-2}
A. Farina, L. Mari,  E. Valdinoci, Splitting theorems, symmetry results and overdetermined problems for Riemannian manifolds. Comm. Partial Differential Equations 38 (2013), no. 10, 1818-62.

\bibitem{grisvard} P. Grisvard, Elliptic Problems in Nonsmooth Domains, SIAM Classics in Applied Mathematics 69, Philadelphia, PA 2011.

\bibitem{Helms} L. L. Helms, Introduction to potential theory. Pure and Applied Mathematics, Vol. XXII Wiley-Interscience A Division of John Wiley  Sons, New York-London-Sydney 1969 ix+282 pp.

\bibitem{hauswirth-et-al}
L. Hauswirth, Laurent, F. H\'elein, F. Pacard, On an overdetermined
elliptic problem. Pacific J. Math. 250 (2011), no. 2, 319-334.

\bibitem{Hsiang-Yu}
W.Y. Hsiang, W.C. Yu, A generalization of a theorem of Delaunay. J. Differential Geom. 16 (1981), no. 2, 161--177.

\bibitem{H.Kielhofer}   H. Kielhofer. Bifurcation Theory: An Introduction with Applications to PDEs. Applied Mathematical
Sciences, 156. Springer-Verlag, New York (2004).


\bibitem{Ku-Pra}  S. Kumaresan, J. Prajapat, Serrin's result for hyperbolic space and sphere. Duke Math. J. 91 (1998), 17-28.

\bibitem{Leon} G. P. Leonardi, An overview on the Cheeger problem, to appear on the Proceedings of the conference, New trends in shape optimization, held in Erlangen, September 2013.

\bibitem{I.A.M} I. A. Minlend, Existence of self-Cheeger sets
on Riemannian manifolds. Arch. Math. DOI: 10.1007/s00013-017-1076-6.

 \bibitem{morabito-sicbaldi}
F. Morabito and P. Sicbaldi, Delaunay type domains for an overdetermined elliptic problem in $S^N\times \R$ and $\mathbb{H}^N\times \R$.
ESAIM Control Optim. Calc. Var. 22 (2016), no. 1, 1--28.


\bibitem{EParini} E. Parini, An introduction to the Cheeger problem. Surveys Math.
Appl. 6 (2011), 9-22.

\bibitem{Perdomo} O. M. Perdomo, Embedded Constant Mean Curvature Hypersurfaces on Spheres, Asian J. Math. Volume 14, Number 1 (2010), 73-108.

\bibitem{Ros-Ruiz-Sicbaldi-2015} A. Ros, D. Ruiz, P. Sicbaldi,
A rigidity result for overdetermined elliptic problems in the plane. Comm. Pure Appl. Math.  70 (2017)  no.  7, 
 1223-1252.

\bibitem{Ros-Ruiz-Sicbaldi-2016}  A. Ros, D. Ruiz, P. Sicbaldi,   Solutions to overdetermined elliptic problems in nontrivial exterior domains. http://arxiv.org/abs/1609.03739.


\bibitem{Ros-Sicbaldi}
A. Ros, P. Sicbaldi, Geometry and topology of some overdetermined elliptic problems.
J. Differential Equations 255 (2013), no. 5, 951-7.


\bibitem{ScSi} F. Schlenk and  P. Sicbaldi, Bifurcating extremal domains for the
first eigenvalue of the Laplacian. Adv. Math. 229 (2012) 602-632.


\bibitem{Serrin} J. Serrin, A Symmetry Theorem in Potential Theory. Arch. Rational
Mech. Anal. 43 (1971), 304-318.

\bibitem{Sic} P. Sicbaldi, New extremal domains for the first eigenvalue
of the Laplacian in flat tori. Calc. Var. (2010) 37:329-344.
%
\bibitem{Sirakov} B. Sirakov, Overdetermined Elliptic Problems in Physics.
In: Nonlinear PDE's in Condensed Matter and Reactive Flows, ed. by
Henri Berestycki and Yves Pomeau, Springer 2002.

\bibitem{J. Smoller} J. Smoller. Shock waves and reaction-diffusion equations. Second edition. Grundlehren der Mathematischen
Wissenschaften 258. Springer-Verlag, New York, 1994.

\bibitem{Traizet} M. Traizet, Classification of the solutions to an overdetermined elliptic problem in the plane. Geom. Funct. Anal. 24 (2014), 690-720.

\bibitem{Wein} H. F. Weinberger, Remark on the preceding paper of Serrin.
Arch. Rational Mech. Anal. 43 (1971), 319-320.


 \end{thebibliography}
\end{document}